\documentclass{amsart}

\usepackage{amssymb}
\usepackage[utf8]{inputenc}
\usepackage{xcolor}
\usepackage{mathtools}
\usepackage{hyperref}
\usepackage{mathrsfs}
\usepackage{comment}
\usepackage{tikz}
\usepackage{tikz-cd}
\usepackage[inline]{enumitem}

\hypersetup{colorlinks=true,citecolor=cyan,linkcolor=blue,linktocpage=true}

\newcommand{\fie}{\Bbbk}
\newcommand{\FK}[1]{\mathcal{FK}_{#1}}
\newcommand{\id}{\mathrm{id}}
\newcommand{\Imod}[1]{N_{#1}}

\newcommand{\NA}{\mathscr{B}}
\newcommand{\NAn}[1]{\mathscr{B}^{#1}}
\newcommand{\ot}{\otimes }
\newcommand{\pfie}[1]{\mathbb{F}_{#1}}
\newcommand{\Rep}{\operatorname{Rep}}
\newcommand{\SG}[1]{\mathbb{S}_{#1}}
\newcommand{\sgn}{\mathrm{sgn}}
\newcommand{\yd}[1]{\prescript{#1}{#1}{\mathcal{YD}}}

\newcommand{\Z}{\mathbb{Z}}

\newtheorem{thm}{Theorem}[section]
\newtheorem{lem}[thm]{Lemma}
\newtheorem{pro}[thm]{Proposition}
\newtheorem{cor}[thm]{Corollary}

\theoremstyle{definition}

\newtheorem{exa}[thm]{Example}

\theoremstyle{remark}
\newtheorem{rem}[thm]{Remark}

\newcommand{\YD}[1]{{_{#1}^{#1}}\mathcal{YD}}
\newcommand{\semisimplify}{\boldsymbol{\mathrm{S}}} 
\newcommand{\Ver}{\mathrm{Ver}}
\newcommand{\Vect}{\mathrm{Vect}}
\newcommand{\sVect}{\mathrm{sVect}}

\newcommand{\Cent}{C}
\newcommand{\Res}{\mathrm{Res}}

\newcommand{\ssRep}{\overline{\operatorname{Rep}}\,}
\newcommand{\Hopf}{L} 
\newcommand{\supp}{\operatorname{supp}}
\newcommand{\fchar}{\mathbb{F}}
\newcommand{\N}{\mathbb{N}}
\renewcommand{\a}{a} 

\title{Higher Fomin--Kirillov algebras are infinite-dimensional}

\author[N. Andruskiewitsch et al.]{Nicolás Andruskiewitsch,
Iván Angiono,
Istvan Heckenberger,
Simon Lentner,
Christian Reiher,
Leandro Vendramin}

\address[Andruskiewitsch]{Profesor Em\'erito de la Facultad de Matem\'atica, F\'isica,
Astronom\'\i a y Computaci\'on, Universidad Nacional de C\'ordoba. 
\newline
CIEM-CONICET. 
Medina Allende s/n (5000) Ciudad Universitaria, C\'ordoba, Argentina.
\newline
Department of Mathematics and Data
Science, Vrije Universiteit Brussel, Pleinlaan 2, 1050 Brussels, Belgium.
\newline 
Shenzhen International Center for Mathematics, Southern University of Science and Technology, Shenzhen 518055, China.}
\email{nicolas.andruskiewitsch@unc.edu.ar}

\address[Angiono]{Facultad de Matem\'atica, Astronom\'ia y F\'isica,
Universidad Nacional de C\'ordoba. CIEM -- CONICET.
Medina Allende s/n (5000) Ciudad Universitaria, C\'ordoba, Argentina}
\email{ivan.angiono@unc.edu.ar}

\address[Heckenberger]{Philipps-Universit\"at Marburg,
FB Mathematik und Informatik,
Hans-Meer\-wein-Stra\ss e,
35032 Marburg, Germany}
\email{heckenberger@mathematik.uni-marburg.de}

\address[Lentner]{Okinawa Institute of Science and Technology
1919-1 Tancha, Onna-son
Okinawa 904-0411, Japan}
\email{simon@lentner.net}

\address[Reiher]{Fachbereich Mathematik, Universität Hamburg, Hamburg, Germany}
\email{Christian.Reiher@uni-hamburg.de}

\address[Vendramin]{Department of Mathematics and Data
Science, Vrije Universiteit Brussel, Pleinlaan 2, 1050 Brussels, Belgium}
\email{Leandro.Vendramin@vub.be}

\begin{document}

\begin{abstract}
We show that the Fomin--Kirillov algebras $\FK{n}$ are
infinite-dimen\-sional for $n\geq6$. This answers a question posed by
Fomin and Kirillov in~1997. To prove this, we demonstrate that
 the Nichols algebras supported on the
conjugacy class of transpositions of the symmetric group $\SG{n}$, for $n\geq6$,
are infinite-dimensional, for both relevant $2$-cocycles. For this, we develop a new  method of partially semisimplifying a Nichols algebra along an automorphism of order $p$ in characteristic $p$.
\end{abstract}

\thanks{\emph{2020 MSC.} 16S30, 17B35. 
The work of N. A. and I. A. is partially supported by 
CONICET (PIP 11220200102916CO), FONCyT-ANPCyT (PICT-2019-03660), 
and the Secyt (UNC) (Proyecto Consolidar 33620230100517CB).
The work of I.~H. and L.~V. is partially supported by Weave 
(DFG HE 5375/12-1 and FWO G043326N), S.~L. is partially supported by JSPS KAKENHI.
}

\maketitle 

\section{Introduction}
Throughout, $\fie$ denotes a field of characteristic zero. 
The Fomin--Kirillov algebras $\FK{n}$ were introduced in \cite{MR1667680} in connection with the Schubert calculus of flag manifolds. 
Fomin--Kirillov algebras also arise in non-commutative differential
geometry~\cite{MR2106930}.
Determining their dimensions has been a longstanding 
problem, see \cite[Problem~2.2]{MR1667680}. 
In this paper, we prove that $\FK{n}$ is infinite-dimensional for every $n\geq6$. This follows from a more general result on Nichols algebras of
Yetter--Drinfeld modules whose support is isomorphic, as a rack, to
the conjugacy class of transpositions in $\SG6$.

\subsection{Pointed Hopf algebras}
The dimension problem for Fomin--Kirillov algebras is closely related to the classification of finite-dimensional pointed Hopf algebras. In the lifting method of Andruskiewitsch and Schneider~\cite{MR1659895}, a central step is to determine which Yetter--Drinfeld modules over a group algebra have finite-dimensional Nichols algebras. The symmetric groups form a particularly challenging family in this classification.
For $n\geq6$, the results of \cite{MR2786171,MR2481050,MR2317945} reduce this question for simple Yetter--Drinfeld modules over $\fie\SG{n}$ to those supported on the conjugacy class of transpositions, up to an outer automorphism when $n=6$. Indeed, an outer automorphism of $\SG6$ interchanges the conjugacy classes of transpositions and products of three disjoint transpositions.

Among modules supported on the transposition class, the remaining candidates for finite-dimensional Nichols algebras are represented by the constant cocycle $-1$ and the cocycle $\chi$ defined below. For $n\geq3$, let $X=(12)^{\SG{n}}$. Given $\sigma,\tau\in X$, write $\tau=(ij)$ with $i<j$, and set
\[
\chi(\sigma,\tau)=
\begin{cases}
\phantom{-}1 & \text{if $\sigma(i)<\sigma(j)$,}\\
-1 & \text{if $\sigma(i)>\sigma(j)$.}
\end{cases}
\]
Let $V_n$ be the braided vector space with basis
$\{x_\sigma:\sigma\in X\}$ and braiding
\[
c(x_\sigma\otimes x_\tau)
=\chi(\sigma,\tau)\,x_{\sigma\tau\sigma}\otimes x_\sigma,
\qquad \sigma,\tau\in X.
\]
The Nichols algebras associated with $\chi$ and the constant cocycle $-1$
are twist-equivalent and hence have the same Hilbert
series~\cite{MR2944712}.

For $n\in\{3,4,5\}$, the Nichols algebras $\NA(V_n)$ are finite-dimensional.
In fact, $\dim\NA(V_3)=12$ and $\dim\NA(V_4)=576$ by~\cite{MR1667680,MR1800714}, while
$\dim\NA(V_5)=8294400$ was computed by Gra\~na (see for example~\cite{MR2811166}). However, it
remained open whether $\NA(V_n)$ is finite-dimensional for any $n\geq6$.

\begin{thm}
\label{thm:main}
Let $G$ be a finite group and $V\in\yd{\fie G}$. Assume that 
the support of $V$ is isomorphic as a rack
to $(12)^{\SG{6}}$. Then $\dim\NA(V)=\infty$.
\end{thm}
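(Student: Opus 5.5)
We plan to argue in three stages: reduce the statement to the two braided vector spaces $V_6$ with cocycles $-1$ and $\chi$; reduce those to a statement in characteristic $p$; and then use partial semisimplification.

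\textbf{Reduction to $V_6$.} A Yetter--Drinfeld module over $\fie G$ whose support is the rack $X=(12)^{\SG6}$ is determined, as a braided vector space, by $X$ together with a non-principal $2$-cocycle. By the results cited in the introduction (\cite{MR2786171,MR2481050,MR2317945}), whenever the cocycle is not cohomologous to the constant $-1$ or to $\chi$, one already has $\dim\NA(V)=\infty$. Infinite-dimensionality is preserved by restricting to a braided subspace with the same rack, and by twist-equivalence (\cite{MR2944712}). So it is enough to show $\dim\NA(V_6)=\infty$, and for this one cocycle suffices.

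\textbf{Passage to characteristic $p$.} The braiding of $V_6$ has all coefficients equal to $\pm1$, so it is defined over $\Z$. We will show that the Hilbert series over a field $\fchar$ of characteristic $p$ is coefficientwise at most the one over $\fie$. The reason is that the kernel of the quantum symmetrizer can only grow under reduction mod $p$. Hence it suffices to prove $\dim\NA(V_6\otimes\fchar)=\infty$ for a single prime $p$.

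\textbf{Partial semisimplification.} This is the heart of the argument. We choose $p$ dividing the order of a rack automorphism $\phi$ of $X$; for example $p=3$ or $p=5$, with $\phi$ conjugation by a $p$-cycle, or by $(123)(456)$, chosen compatibly with $\chi$. Then $\phi$ acts on $V$, and hence on $\NA(V)$, by braided automorphisms of order $p$. We view $\NA(V)$ as a Hopf algebra in Yetter--Drinfeld modules over the group algebra of $G\times\langle\phi\rangle$, which is non-semisimple in characteristic $p$. We then pass to the semisimplification of $\Rep\,\Z/p$, namely the Verlinde category $\Ver_p$. The key structural steps are as follows.
\begin{enumerate}
\item Semisimplification is a symmetric monoidal functor that kills negligible summands. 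It therefore sends $\NA(V)$ to a Nichols algebra $\NA(\semisimplify V)$ in a braided category built from $\YD{G^\phi}$ and $\Ver_p$.
\item Finite-dimensionality is preserved: $\dim\NA(V)<\infty$ forces $\NA(\semisimplify V)$ to be finite-dimensional. We would argue this via a graded-dimension estimate, since $\NA$ is a quotient of the tensor algebra compatible with the functor.
\item Because $\phi$ has free orbits, $\semisimplify V$ is much smaller. The orbits of size $p$ give indecomposables of dimension $p$, which survive only through trivial and sign-type pieces. One then expects to land on a Nichols algebra of a small rank over a centralizer-type group, for instance something diagonal or a $\sVect$-type rank-two object.
\end{enumerate}

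Finally, we identify this smaller Nichols algebra as infinite-dimensional. We would do this either by a known classification, such as the generalized root system criterion for diagonal type in positive characteristic, or by exhibiting a Cartan matrix that is not of finite type. The main obstacle is step 2 of the semisimplification, together with choosing $\phi$ and $p$ so that the resulting object is demonstrably infinite. For that choice we would first try $p=5$ with a $5$-cycle, which fixes one point and leaves the orbit structure on transpositions computable.
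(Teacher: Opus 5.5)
There is a genuine gap, located in step (1) of your partial semisimplification, and it invalidates the strategy as stated. The functor $\semisimplify$ is monoidal but not exact. So $\semisimplify(\NA(V))$ is a graded braided Hopf algebra that is in general not $\NA(\semisimplify V)$. It is not even a quotient of $\semisimplify(T(V))=T(\semisimplify V)$: a surjection $\Imod 3\twoheadrightarrow\Imod 2$ of $C_3$-modules is sent to the zero map $0\to\semisimplify(\Imod 2)\neq 0$. Likewise, a free orbit gives the projective module $\Imod p$, which is killed completely; nothing ``sign-type'' survives from it.

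Moreover, $\NA(\semisimplify V)$ can never give a contradiction here. The automorphisms you propose are conjugations by some $a$. An element of order $p$ acts trivially on each fixed one-dimensional $V_g$ in characteristic $p$. Hence $\semisimplify V$ is just the braided subspace $\bigoplus_{g\in\supp V\cap\Cent_G(a)}V_g$ of $V$, whose Nichols algebra embeds into $\NA(V)$. Concretely, for a $5$-cycle or for $(123)(456)$ no transposition is fixed, so $\semisimplify V=0$. For a $3$-cycle you recover the transpositions of $\SG 3$, i.e.\ a copy of $\FK 3$. Your step (2) is true, but vacuous for the same reason.

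The missing idea is to exploit the non-exactness: new primitives appear in higher degrees. Take $p=3$ and let $a$ be a lift of $(123)(456)$ along the surjection $\overline\varphi\colon G\to\SG 6$; it has order $3$ because $\ker\overline\varphi\cap[G,G]=1$. Write $g_{ij}$ for the element of $\supp V$ corresponding to $(ij)$. Then:
\begin{itemize}
\item degree one is killed;
\item $\NA^2(V)_{g_{12}g_{23}}$ is a $2$-dimensional quotient of the free $\fchar\langle a\rangle$-module $V^{\otimes 2}_{g_{12}g_{23}}$, hence isomorphic to $\Imod 2$, and survives as an odd object;
\item $\NA^3(V)_{g_{14}g_{25}g_{36}}$ is one-dimensional and survives as an even object.
\end{itemize}
Since all lower degrees of $\semisimplify(\NA(V))$ vanish, these images are primitive. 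They form a braid-indecomposable triple of absolutely simple Yetter--Drinfeld modules over $\Cent_G(a)\times C_2$ of dimensions $2,2,3$. The Heckenberger--Vendramin classification makes their Nichols algebra infinite-dimensional. Via the coradical filtration this contradicts $\dim\semisimplify(\NA(V))\le\dim\NA(V)<\infty$.

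This argument needs the exact values $\dim\NA^2(V)_{g_{12}g_{23}}=2$ and $\dim\NA^2(V)_{g_{12}g_{34}}=1$ in characteristic $3$. Your reduction mod $p$ only supplies an upper bound.

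Your first reduction is also incomplete. The classification results you cite concern $G=\SG n$. For arbitrary $G$ one must also exclude $\dim V_g>1$, non-absolutely-simple $V$, and other cocycles. The paper does this via the $\SG 4$-subrack results of \cite{AHV}.
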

For every $n\geq6$, the subspace of $V_n$ spanned by
$\{x_{(ij)}:1\leq i<j\leq6\}$ is a braided subspace isomorphic to $V_6$.
Consequently, $\NA(V_6)$ embeds into $\NA(V_n)$, and
Theorem~\ref{thm:main} implies that $\dim \NA(V_n) = \infty$.
In view of \cite[Theorem~1.1]{MR2786171}, we get:

\begin{cor}
\label{cor:S6}
Let $n\geq6$ and $0\ne V\in\yd{\mathbb{C}\SG{n}}$. Then $\dim\NA(V)=\infty$. 
\end{cor}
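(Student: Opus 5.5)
The plan is to derive Corollary~\ref{cor:S6} from Theorem~\ref{thm:main} by a reduction to simple modules and then a case split according to the support. In the first step I reduce to the case that $V$ is simple. Since $\fie\SG{n}$ is semisimple over $\mathbb{C}$, the category $\yd{\mathbb{C}\SG{n}}$ is semisimple. So a nonzero $V$ contains a simple Yetter--Drinfeld submodule $U$, and in fact $V\cong U\oplus U'$ in the braided category. The inclusion $U\hookrightarrow V$ is a morphism of Yetter--Drinfeld modules, hence induces an injective morphism of graded braided Hopf algebras $\NA(U)\hookrightarrow\NA(V)$; this is the standard fact that a Nichols algebra of a direct summand embeds. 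It therefore suffices to show $\dim\NA(U)=\infty$ for every simple $U\in\yd{\mathbb{C}\SG{n}}$ with $n\geq6$.

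In the second step I invoke \cite[Theorem~1.1]{MR2786171}, the cited classification result, which for $n\geq6$ (building on \cite{MR2481050,MR2317945}) says the following. If the simple module $U=M(\mathcal{O},\rho)$ has $\dim\NA(U)<\infty$, then $\mathcal{O}$ is the class of transpositions, $\rho$ gives one of the two cocycles (constant $-1$ or $\chi$), and so $U\cong V_n$ up to twist-equivalence. When $n=6$ there is an additional possibility: $\mathcal{O}$ may be the class of products of three disjoint transpositions, which is the image of the transposition class under an outer automorphism $\phi$ of $\SG6$. In all other cases $\dim\NA(U)=\infty$ directly, and I would check that the wording of that theorem indeed covers every other pair $(\mathcal{O},\rho)$.

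The third step treats the remaining cases. If $\mathcal{O}=(12)^{\SG{n}}$, I use the braided subspace spanned by $x_{(ij)}$ with $1\le i<j\le 6$. This subspace is closed under the braiding, since the transpositions of $\SG6\subset\SG{n}$ form a subrack and the cocycle restricts. It is a Yetter--Drinfeld submodule over $\fie\SG6$ whose support is isomorphic as a rack to $(12)^{\SG6}$. Theorem~\ref{thm:main} gives that its Nichols algebra is infinite-dimensional, and the Nichols algebra of a braided subspace embeds into $\NA(U)$. Alternatively, I would apply Theorem~\ref{thm:main} directly, since it is stated for any support isomorphic to $(12)^{\SG6}$ and a sub-object is enough. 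In the exceptional case $n=6$, $\mathcal{O}=(12)(34)(56)^{\SG6}$, the class $\mathcal{O}$ with conjugation is isomorphic as a rack to $(12)^{\SG6}$ via $\phi$. So Theorem~\ref{thm:main} applies to $U$ itself, with $G=\SG6$.

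The main obstacle is bookkeeping rather than new mathematics. One has to confirm that the exceptional list in \cite[Theorem~1.1]{MR2786171} consists exactly of modules whose support contains a subrack isomorphic to $(12)^{\SG6}$, so that Theorem~\ref{thm:main}, which is phrased only through the support, applies. One must also verify that the subspace embedding of Nichols algebras used above holds in the braided (not merely Yetter--Drinfeld) sense.
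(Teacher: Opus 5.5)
Your proposal is correct and follows the paper's route. The paper likewise invokes \cite[Theorem~1.1]{MR2786171} to reduce to modules supported on transpositions (or, for $n=6$, the class of type $(2^3)$, which is rack-isomorphic to the transpositions via the outer automorphism), embeds the Nichols algebra of the braided subspace indexed by the transpositions of $\SG6$, and applies Theorem~\ref{thm:main}. Your reduction to simple summands and your observation that Theorem~\ref{thm:main} needs only the support, and hence covers both cocycles, make explicit what the paper leaves implicit.
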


Combining Corollary~\ref{cor:S6} with the Lifting Method~\cite{MR1659895}, 
we get:

\begin{thm}
\label{thm:pointed}
Let $n\geq6$ and let $H$ be a finite-dimensional complex 
pointed Hopf algebra with group of grouplikes $G(H)\simeq\SG{n}$. Then 
$H\simeq\mathbb{C}\SG{n}$. 
\end{thm}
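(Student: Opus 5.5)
This is a deduction from Corollary~\ref{cor:S6} through the Lifting Method of \cite{MR1659895}; I will run the standard chain of reductions. Let $H$ be a finite-dimensional complex pointed Hopf algebra with $\Gamma:=G(H)\simeq\SG{n}$, $n\geq6$, and consider the coradical filtration $H_0=\mathbb{C}\Gamma\subseteq H_1\subseteq\cdots$. Since $H$ is pointed, $H_0$ is a Hopf subalgebra and the filtration is a Hopf algebra filtration. The associated graded Hopf algebra $\operatorname{gr}H=\bigoplus_k H_k/H_{k-1}$ is again finite-dimensional with $\dim \operatorname{gr}H=\dim H$. It comes with a projection onto $\mathbb{C}\Gamma$ and the inclusion of $\mathbb{C}\Gamma$, so by Radford--Majid bosonization $\operatorname{gr}H\simeq R\#\mathbb{C}\Gamma$. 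Here $R=\bigoplus_k R(k)$ is a graded connected braided Hopf algebra in $\yd{\mathbb{C}\Gamma}$ with $R(0)=\mathbb{C}$. Moreover $R$ is coradically graded, so its primitive elements are $V:=R(1)$, the infinitesimal braiding of $H$.

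\textbf{Key step.} The subalgebra of $R$ generated by $V$ is the Nichols algebra $\NA(V)$. This is the standard fact that a coradically graded braided Hopf algebra generated in degree one by its primitives is the Nichols algebra: the kernel of the canonical map $T(V)\to\langle V\rangle$ is a graded Hopf ideal, and it meets degrees $\geq2$ only, contained in the maximal such ideal. Then $\dim\NA(V)\leq\dim R\leq\dim H<\infty$, so $\NA(V)$ is finite-dimensional. By Corollary~\ref{cor:S6} applied to $V\in\yd{\mathbb{C}\SG{n}}$, this forces $V=0$.

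If $V=R(1)=0$, then $R=\mathbb{C}$, since a coradically graded coalgebra has $R(k)=0$ for all $k\geq1$ once $R(1)=0$ (every nonzero homogeneous element of positive degree has a nonzero component of its comultiplication in degrees $(1,k-1)$). Hence $\operatorname{gr}H\simeq\mathbb{C}\Gamma$, which means $H_1=H_0$, and therefore $H=H_0=\mathbb{C}\SG{n}$. Equivalently, $\dim H=\dim\operatorname{gr}H=|\Gamma|$.

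The only nonformal input is Corollary~\ref{cor:S6}. Its proof in turn rests on Theorem~\ref{thm:main} together with \cite{MR2786171}, and that is where the difficulty lies. Within the present argument, the one point needing care is the vanishing step for $R$ when $R(1)=0$. This follows directly from the definition of a coradical grading, so no open generation-in-degree-one conjecture is needed.
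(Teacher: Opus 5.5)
Your proposal is correct and is the standard lifting-method argument that the paper invokes in one line (``combining Corollary~\ref{cor:S6} with the Lifting Method''): $\operatorname{gr}H\simeq R\#\mathbb{C}\SG{n}$, the infinitesimal braiding $V=R(1)$ satisfies $\dim\NA(V)\le\dim R<\infty$, hence $V=0$ by Corollary~\ref{cor:S6}, and coradical gradedness of $R$ then forces $R=\mathbb{C}$ and $H=\mathbb{C}\SG{n}$. One small precision: the parenthetical argument in your key step only gives a surjection $\langle V\rangle\twoheadrightarrow\NA(V)$, which already suffices for finiteness, whereas the identification $\langle V\rangle=\NA(V)$ also needs $P(\langle V\rangle)\subseteq P(R)=R(1)$, a fact you stated earlier.
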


\subsection{Scheme of the proof of Theorem~\ref{thm:main}}
Our proof builds on approaches developed by the authors, in \cite{HMV, AHV}, which study Nichols algebras via reduction to finite characteristic, and in \cite{2609.12895}, where quantum groups are partially semisimplified to objects in the Verlinde category over~$\mathbb{C}$.
There are, very roughly, four major steps:
\begin{enumerate}[leftmargin=*,label=\rm{(\roman*)}]
\item 
We use existing results based on filtrations and reflection theory to reduce a general rack with cocycle of the form of interest to the case of the Fomin--Kirillov algebras over $\SG 6$ and to a characteristic 3 base field;

\medbreak
\item For an automorphism $\alpha$ of order $p$ of a Hopf algebra $\Hopf$ in characteristic~$p$, one can upgrade the semisimplification functor $\Rep C_p\to \Ver_p$ in \cite{MR4486913} to a braided monoidal functor from $\Hopf$-Yetter--Drinfeld modules to a partial semisimplification of this category. For $G$ a finite group and $L=\fchar G$, this turns out to be the Deligne product of $\Ver_p$ and of the category of Yetter--Drinfeld modules over the stabilizer subgroup $G^\alpha\subseteq G$. 
\medbreak
\item In our case, we take the inner automorphism $\alpha$ acting on $\SG 6$  given by conjugation with $\a=(123)(456)$. The partial semisimplification maps a Yetter--Drinfeld module over $\SG 6$ to a Yetter--Drinfeld module over $\SG 3\times C_3\times C_2$. For this particular choice of $\alpha$, the degree~$1$ part (the conjugacy class of transpositions) is sent to zero, and the degree~$2$~and~$3$ parts become new primitive elements, whose Nichols algebra we can study; the same effect was observed in \cite{2609.12895} Section 4.5 for $v=2$.

\medbreak
\item Over the group $\SG 3\times C_3\times C_2$, all finite-dimensional Nichols algebras were classified in \cite{HV} and there is none of the form we obtained in the previous item, not even at the level of rack sizes. This concludes a proof by contradiction.
\end{enumerate}

We expect to apply the new method of partially semisimplifying a Nichols algebra along an automorphism of order $p$ in characteristic $p$ to related problems.

\subsection{Fomin--Kirillov algebras}
For an integer $n\geq3$ denote by $\FK{n}$ the algebra with generators $x_{(ij)}$, 
where $1\leq i<j\leq n$, and relations
\begin{align*}
&x_{(ij)}^{2}=0, && \text{for $1\leq i<j\leq n$,}\\
&x_{(ij)}x_{(jk)}=x_{(jk)}x_{(ik)}+x_{(ik)}x_{(ij)}, && \text{for $1\leq i<j<k\leq n$,}\\
&x_{(jk)}x_{(ij)}=x_{(ik)}x_{(jk)}+x_{(ij)}x_{(ik)}, && \text{for $1\leq i<j<k\leq n$,}\\
&x_{(ij)}x_{(kl)}=x_{(kl)}x_{(ij)}, && \text{for distinct $i,j,k,l$ with $i<j$ and $k<l$.}
\end{align*}

Several approaches have been developed to study the dimension of 
Fomin--Kirillov algebras. Fomin and Procesi~\cite{MR1774762} introduced Hopf-algebraic
tools for $\FK{n}$ and used them to obtain a decomposition
of $\FK{n}$, as a graded vector space, into a tensor product of
$\FK{n-1}$ with a certain quotient of a free algebra. In particular, the
Hilbert series of $\FK{n-1}$ divides that of $\FK{n}$. Blasiak, Liu and Mészáros~\cite{MR3552907} studied the
subalgebras of $\FK{n}$ generated by the edges of a graph on $n$
vertices. 

\medbreak
For every $n\geq3$, by definition of Nichols algebra, there is a canonical
surjection
\[
\FK{n}\to\NA(V_n),
\]
which is an isomorphism for $n\in\{3,4,5\}$. This canonical surjection together
with Theorem \ref{thm:main} yields the following result.

\begin{thm}
\label{thm:FK} The dimension of the algebra $\FK{n}$ is infinite
for all $n\geq6$.
\end{thm}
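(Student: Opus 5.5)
The statement follows formally from Theorem~\ref{thm:main} together with the canonical surjection $\FK{n}\to\NA(V_n)$, so the plan is to check that the hypotheses of Theorem~\ref{thm:main} hold and then push infinite-dimensionality back along the surjection.

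\textbf{Realizing $V_6$ as a Yetter--Drinfeld module.} I would first realize $V_6$ as an object of $\yd{\fie G}$ for a finite group $G$, taking $G=\SG{6}$ itself. The grading is $\deg x_\sigma=\sigma$. The action is defined by $g\cdot x_\tau=\chi_g(\tau)\,x_{g\tau g^{-1}}$, where the signs $\chi_g(\tau)$ are chosen so that $\sigma\cdot x_\tau=\chi(\sigma,\tau)x_{\sigma\tau\sigma}$ for $\sigma\in X$. The cocycle condition for $\chi$ guarantees that this extends to a group action: $\chi$ is the standard cocycle of the Fomin--Kirillov/Nichols literature, and concretely one can set $g\cdot x_{(ij)}=\pm x_{(g(i)g(j))}$, with sign $+$ if and only if $g(i)<g(j)$. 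With this definition:
\begin{itemize}
\item the Yetter--Drinfeld compatibility $\deg(g\cdot x)=g\deg(x)g^{-1}$ is immediate;
\item the induced braiding $c(x_\sigma\ot x_\tau)=\sigma\cdot x_\tau\ot x_\sigma$ is exactly the braiding of $V_6$;
\item the support of $V_6$ is $X=(12)^{\SG6}$, as a rack with conjugation.
\end{itemize}
The only verification needed is that the sign rule is multiplicative. It is: the map $g\mapsto g\cdot{-}$ is the natural permutation action on ordered pairs followed by reordering, and the reordering sign composes correctly. Hence Theorem~\ref{thm:main} gives $\dim\NA(V_6)=\infty$.

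\textbf{Passing from $V_6$ to $V_n$.} For $n\ge6$, as noted after Theorem~\ref{thm:main}, the span of the $x_{(ij)}$ with $i<j\le6$ is a braided subspace of $V_n$ isomorphic to $V_6$. The braiding restricts correctly because $\chi(\sigma,\tau)$ for $\sigma,\tau$ supported in $\{1,\dots,6\}$ does not depend on $n$. Nichols algebras of braided subspaces embed, so $\NA(V_6)\hookrightarrow\NA(V_n)$, and therefore $\dim\NA(V_n)=\infty$.

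\textbf{Transferring to $\FK{n}$.} It remains to see that the canonical map $\FK{n}\to\NA(V_n)$ is a well-defined surjection. It is surjective because $\NA(V_n)$ is generated by $V_n$. For well-definedness, each defining relation of $\FK{n}$ must lie in the kernel of the quantum symmetrizer in degree $2$, i.e.\ be annihilated by $\id+c$. I would check this case by case:
\begin{itemize}
\item $(\id+c)(x_{(ij)}\ot x_{(ij)})=0$, since $\chi((ij),(ij))=-1$;
\item for disjoint transpositions, $\chi=1$, so the commutator is killed;
\item for the three-term relations, a short sign computation in the $\SG3$ generated by $(ij)$, $(jk)$, $(ik)$ does the job.
\end{itemize}
A surjection onto an infinite-dimensional algebra forces $\dim\FK{n}=\infty$.

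The only genuine content beyond Theorem~\ref{thm:main} is the sign bookkeeping in the three-term relations and the verification that the cocycle yields an honest $\SG6$-action. I expect that three-term sign check to be the main, though still routine, obstacle.
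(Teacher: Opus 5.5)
Your proposal is correct and takes the paper's route. You apply Theorem~\ref{thm:main} to $V_6$ realized over $\SG6$, embed $\NA(V_6)\hookrightarrow\NA(V_n)$ via the braided subspace, and pull back along the canonical surjection $\FK{n}\to\NA(V_n)$; your explicit checks of the $\SG6$-action and of the quadratic relations being killed by $\id+c$ fill in details the paper leaves implicit.
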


A very different proof of Theorem~\ref{thm:FK} was proposed in \cite{Barligea},
but we identified a gap in the argument, which the author has acknowledged.
 
\subsection{Use of AI}
The proof strategy used in this paper was suggested by \text{GPT-6} Astra with 4 agents. The setting there was restricted to the Fomin--Kirillov algebra over $\SG 6$.
The AI-suggested arguments were partly adopted, partly further developed, and embedded in a more general theoretical framework.
The authors take full responsibility for the mathematical content and exposition.

\subsection{Added on October 13, 2026}
Today we received a preprint by Yang Zhang (James Cook University)
presenting an alternative proof---completely different from ours--- of Theorem~\ref{thm:FK}, thought not of Theorem~\ref{thm:main}.

\section{A braided monoidal functor}
\label{section:braided_monoidal_functor}

In this section,
all groups are finite, and modules and Hopf algebras are finite-dimensional. 
We fix a field $\fchar $ of positive characteristic $p$.

Let $A$ be a finite group. We denote by $\Rep A$ the finite-dimensional representations of $A$ over $\fchar$. Then by
\cite[Section 2.2 and Remark 2.8]{MR4486913}
 there is a general process called \emph{semisimplification} which leads to a semisimple symmetric monoidal category $\ssRep A$ and a (non-exact non-faithful) symmetric monoidal functor 
$$\semisimplify\colon\,\Rep A\to \ssRep A.$$ 
In particular for the cyclic group $A=C_p$ of order $p$, $\ssRep A=\Ver_p$ is the \emph{Verlinde category}. It is well-known that $\Ver_2\cong\Vect$ and $\Ver_3\cong \sVect$, 
\cite[Example 3.2]{zbMATH07213678}. 

\medbreak
For convenience, we consider in this paper the category $\sVect$
as the full subcategory of $\yd{\fchar C_2}$ of those Yetter--Drinfeld modules $M$ over the cyclic group $C_2=\{1,\pi\}$, where for 
$M=M_1\oplus M_\pi$ with $\delta(m)=h \otimes m$ for all $m\in M_h$, the action of $\pi$ is given by $\pi m=m$ on $M_1$ and $\pi m=-m$ on $M_\pi$.

We collect some relevant facts in the following example.

\begin{exa}\label{exm:semisimplification}
The category of representations of the group $C_p=\langle a\rangle$ consists of~$p$ indecomposable modules
\begin{align*}
\Imod n &=\fchar[t]/t^n, & 1\leq n\leq p,
\end{align*}
where $\dim \Imod n = n$.
The action is given by $a v=(t+1)v$ for all $v\in \Imod n$; 
thus, $\Imod 1$ is the only simple module.
Note that as algebras $\fchar C_p\simeq \fchar[x]/x^p$ with $x=a-1$.

\medbreak
Assume now that $p=3$. For further use, we describe, following \cite{zbMATH07213678}, the
(non-faithful, non-exact) monoidal functor 
$$\semisimplify:\, \Rep C_3\to \sVect.$$
The tensor products of the indecomposable modules of $C_3$ are
$$\Imod 2\otimes \Imod 2 = \Imod 1\oplus \Imod 3,\qquad 
\Imod 2\otimes \Imod 3=\Imod 3\oplus\Imod 3,\qquad 
\Imod 3\otimes \Imod 3
=\Imod 3\oplus \Imod 3\oplus \Imod 3.
$$
Hence $\semisimplify$ sends $\Imod 3$ to zero and $\Imod 1$ and $\Imod 2$ to the one-dimensional comodules with trivial and non-trivial coaction, respectively. For each endomorphism $f$ with non-zero trace, $\semisimplify(f)$ is a non-zero morphism.
The summand $\Imod 1$ in $\Imod 2\otimes \Imod 2$ is spanned by $1\otimes t-t\otimes 1$, hence $\semisimplify$ sends the flip in $\Rep C_p$ to $-$flip in $\sVect$.
Therefore, $\semisimplify$ is a symmetric monoidal functor.
\end{exa}

It is well-known that monoidal functors send algebras to algebras; we spell this out in the setting
of our interest.
Let $\Hopf$ be a Hopf algebra (over the field $\fchar$) and $A$ a group acting on $\Hopf$ by Hopf algebra automorphisms. Then $\Hopf$ can be thought of as a Hopf algebra in the symmetric monoidal category $\Rep A$. Applying the monoidal functor $\semisimplify:\Rep A\to \ssRep A$, 
we see that the image of $\Hopf$ is a Hopf algebra $\semisimplify(\Hopf)$ in $\ssRep A$. 
The functor $\semisimplify$ upgrades to a monoidal functor 
from the category $\prescript{}{\Hopf}{\Rep A}$ of left
$\Hopf$-modules in $\Rep A$, which means $A$-equivariant $\Hopf$-representations, to modules over the image $\semisimplify(\Hopf)$ in the semisimplification of $\Rep A$
\begin{align}
\semisimplify_\Hopf:\, \prescript{}{\Hopf}{\Rep A} \longrightarrow 
\prescript{}{\semisimplify(\Hopf)}{\ssRep A},
\end{align} 
where we use the notation in \cite[3.2.7]{MR4164719}.
Similarly it upgrades to a braided monoidal functor between the respective categories
of Yetter--Drinfeld modules, see \cite{MR1456522}, \cite[Definition \,3.4.12, Remark \,3.4.14]{MR4164719}:
\begin{align}\label{formula:YDfunctor}
\semisimplify_\Hopf^\Hopf:\, \YD{\Hopf}(\Rep A) \longrightarrow
\YD{\semisimplify(\Hopf)}(\ssRep A).
\end{align}

We now apply this to the case of a finite group $G$, $\Hopf=\fchar G$, and
$A=\langle \alpha \rangle \leq \operatorname{Aut}G$, where $\alpha \in \operatorname{Aut}G$ has prime order $p = \operatorname{char} \fchar$. 
Then $\ssRep A=\Ver_p$. As for any abelian $\fchar$-linear monoidal category there is an embedding 
$\Vect\hookrightarrow \Ver_p$ by sending a vector space to the respective 
multiple of the tensor unit, called a \emph{trivial object}. 
\begin{lem}\label{lm:permutationRep}
Let $\Omega$ be a finite set with a permutation action of 
$A=\langle\alpha\rangle \simeq C_p$; then $\Omega$
decomposes as a disjoint union $\Omega_1\cup\Omega_p$ where $\Omega_i$ 
is the union of all orbits of size $i$. 
Then the associated representation $\fchar \Omega$ of $A$ is sent by $\semisimplify$ to $\fchar\Omega_1$, embedded into $\Ver_p$ as a trivial object.
\end{lem}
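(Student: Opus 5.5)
The idea is to reduce to the indecomposable summands and use two facts about the semisimplification: it kills exactly the direct summands whose endomorphisms all have trace zero (negligible objects), and it sends the trivial module $\Imod 1$ to the tensor unit.

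Since $\fchar\Omega$ is the permutation module of $\Omega$, it splits as a direct sum over the $A$-orbits. The functor $\semisimplify$ is additive, so it suffices to treat a single orbit $O$.

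\emph{Orbits of size $1$.} Here $\fchar O$ is the trivial module $\Imod 1$. The functor $\semisimplify$ is monoidal and so preserves the unit, hence $\fchar O$ goes to the tensor unit. Summing over all fixed points, $\fchar\Omega_1$ goes to $\fchar\Omega_1\otimes\mathbf 1$, which is the trivial object attached to the vector space $\fchar\Omega_1$.

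\emph{Orbits of size $p$.} Here $\fchar O\cong\fchar C_p$ is the regular representation. Via $\fchar C_p\simeq\fchar[x]/x^p$ with $x=a-1$, as in Example~\ref{exm:semisimplification}, this module is $\fchar[t]/t^p=\Imod p$. It remains to see that $\Imod p$ is sent to zero, that is, that it is negligible. Its endomorphism ring is $\fchar[t]/t^p$ acting by multiplication. Each endomorphism has the form $c+n$ with $c$ a scalar and $n$ nilpotent, so its trace is $pc=0$. Hence every endomorphism of $\Imod p$ has trace zero, $\Imod p$ is negligible, and $\semisimplify(\Imod p)=0$.

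Putting the two cases together, $\semisimplify(\fchar\Omega)\cong\fchar\Omega_1$, embedded as a trivial object. One should also check that this identification is natural, i.e.\ compatible with the embedding $\Vect\hookrightarrow\Ver_p$. This holds because the projection onto the fixed-point summand $\fchar\Omega_1$ is $A$-equivariant, and its complement is a sum of negligible objects.

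The only step that needs care is the trace computation showing that $\Imod p$ is negligible, and it is immediate from $\dim\Imod p=p=0$ in $\fchar$. For $p=3$ it also agrees with Example~\ref{exm:semisimplification}, where $\Imod 3$ is sent to zero.
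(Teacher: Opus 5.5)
Your proof is correct and follows the paper's argument. You split $\fchar\Omega$ along orbits, send fixed points to the unit, and identify each size-$p$ orbit with $\Imod p$, which $\semisimplify$ kills; you also spell out why $\Imod p$ is negligible (every endomorphism is a scalar plus a nilpotent, so its trace is $pc=0$), which the paper takes as known, and you should add the one-line remark that orbit sizes divide $|A|=p$, so only sizes $1$ and $p$ occur.
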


\begin{proof}
Evidently, $1$, $p$ are the only possible lengths of orbits, since $p$ is prime, so $\fchar\Omega$ decomposes according to the orbits into a direct sum of 1-dimensional trivial permutation representations, which are each sent to the tensor unit by $\semisimplify$, and $p$-dimensional transitive permutation representations, which are as $A$-representations isomorphic to the Jordan block $\Imod p$ and are hence sent to zero by $\semisimplify$. 
\end{proof}

We consider now $\fchar G$, which is a permutation representation on $\Omega=G$. By Lemma~\ref{lm:permutationRep} the image $\semisimplify(\fchar G)$ of the Hopf algebra $\fchar G$
in $\Ver_p$ is the Hopf algebra $\fchar H$, where $H=G^A$ is the subgroup of fixed points,  
embedded into $\Ver_p$ as trivial object. In particular, the category of $\fchar H$-modules 
in $\Ver_p$ is equivalent to the category $\Rep H \boxtimes \Ver_p$ (see~\cite[Example 3.2.4(2)]{MR5134613}), 
where objects in the latter are direct sums of tensor products of $H$-representations 
(i.\,e. objects in ${}_{\fchar H}\Vect$)
with objects
$\semisimplify(\Imod n)$ for $1\leq n\leq p-1$ in the Verlinde category.
Hence we obtain as a special case of the functor $\semisimplify_{\fchar G}^{\fchar G}$ in \eqref{formula:YDfunctor}:

\begin{thm}\label{thm:YDfunctor}
Let $G$ be a finite group, $\alpha \in \operatorname{Aut}G$ of 
order $p = \operatorname{char} \fchar$, 
$A=\langle \alpha \rangle \leq \operatorname{Aut}G$ and let $H=G^A$. 
Then there is a braided monoidal functor
$$
 \semisimplify_{G,\alpha}:\, \YD{\fchar G}(\Rep A) \longrightarrow
\YD{\fchar H}\boxtimes \Ver_p
$$
such that the following diagram of functors (with forgetful functors as vertical arrows) commutes:
\begin{equation}\label{eq:SGalpha}
\begin{tikzcd}[column sep=1cm]
\YD{\fchar G}(\Rep A)
\arrow[r,"\semisimplify_{G,\alpha}"]
\arrow[d]
& 
\YD{\fchar H}\boxtimes \Ver_p
\arrow[d]
\\
\Rep A \arrow[r,"\semisimplify"] & \Ver_p
\end{tikzcd}
\end{equation}
\end{thm}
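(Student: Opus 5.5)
The plan is to obtain $\semisimplify_{G,\alpha}$ as the special case $\Hopf=\fchar G$, $A=\langle\alpha\rangle$ of the general functor $\semisimplify_\Hopf^\Hopf$ in \eqref{formula:YDfunctor}, and then to identify the target category. First, $\fchar G$ is a Hopf algebra in $\Rep A$: $\alpha$ acts by Hopf algebra automorphisms since it permutes group elements, preserves products, fixes $1$, and commutes with $\Delta(g)=g\ot g$, $\varepsilon$ and $S(g)=g^{-1}$. Because $\Rep A$ is symmetric, Yetter--Drinfeld modules over $\fchar G$ in $\Rep A$ make sense and form a braided monoidal category. Since $\semisimplify$ is symmetric monoidal, \eqref{formula:YDfunctor} gives a braided monoidal functor $\YD{\fchar G}(\Rep A)\to\YD{\semisimplify(\fchar G)}(\Ver_p)$. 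This functor applies $\semisimplify$ to the underlying object and transports the action $\fchar G\ot M\to M$ and the coaction $M\to\fchar G\ot M$ using the monoidal structure isomorphisms $\semisimplify(X)\ot\semisimplify(Y)\cong\semisimplify(X\ot Y)$. Hence the square \eqref{eq:SGalpha}, with forgetful functors as vertical arrows, commutes by construction.

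Next I identify $\semisimplify(\fchar G)$. By Lemma~\ref{lm:permutationRep} applied to $\Omega=G$ with the action of $A$, we get $\semisimplify(\fchar G)=\fchar G^A=\fchar H$, embedded as a trivial object. The point to check is that the transported Hopf structure is the usual group algebra structure on $\fchar H$. The inclusion $\fchar H\hookrightarrow\fchar G$ is a split $A$-equivariant inclusion of the summand $\fchar\Omega_1$, with complement $\fchar\Omega_p$, which is a sum of copies of $\Imod p$. The product of a fixed element with an element of a free orbit lands in $\fchar\Omega_p$, and $\Delta(\fchar\Omega_p)\subseteq\fchar\Omega_p\ot\fchar\Omega_p$. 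Summands isomorphic to $\Imod p$ (and their tensor products with anything, which are projective) die under $\semisimplify$. So the induced multiplication and comultiplication on $\semisimplify(\fchar G)\cong\fchar H$ are those of the group algebra of $H$, and similarly for the unit, counit and antipode.

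Finally I identify $\YD{\fchar H}(\Ver_p)$ with $\YD{\fchar H}\boxtimes\Ver_p$. Since $\fchar H$ is a trivial object, the braiding of $\Ver_p$ restricted to pairs with one trivial factor is the ordinary flip. A Yetter--Drinfeld module over $\fchar H$ in $\Ver_p$ is therefore an object $M\in\Ver_p$ with an $H$-grading $M=\bigoplus_h M_h$ (via the coaction into $\fchar H\ot M$) and an $H$-action by automorphisms of $M$ in $\Ver_p$, satisfying $gM_h=M_{ghg^{-1}}$. Because $\Ver_p$ is semisimple with simples $\semisimplify(\Imod n)$, $1\le n\le p-1$, whose endomorphism rings are $\fchar$, we can write $M=\bigoplus_n U_n\ot\semisimplify(\Imod n)$ with $U_n=\operatorname{Hom}(\semisimplify(\Imod n),M)$. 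Each $U_n$ inherits a Yetter--Drinfeld structure over $\fchar H$ in $\Vect$, giving the equivalence with the Deligne product, as in \cite[Example 3.2.4(2)]{MR5134613} for modules. I would verify that the braiding corresponds to $c_{\YD{\fchar H}}\boxtimes c_{\Ver_p}$, which follows because the braiding in $\YD{\fchar H}(\Ver_p)$ is $(\text{action}\ot\id)\circ(\id\ot c^{\Ver_p})$ after coacting, and the coaction only sees the $U_n$ factors. Composing this equivalence with $\semisimplify_{\fchar G}^{\fchar G}$ gives $\semisimplify_{G,\alpha}$.

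I expect the main obstacle to be the second step: making rigorous that the transported Hopf structure on $\semisimplify(\fchar G)$ is exactly the group algebra $\fchar H$. This needs care because $\semisimplify$ is not exact and the identifications pass through the structure maps of $\semisimplify$. I would handle it by using that negligible morphisms form a tensor ideal and that every morphism factoring through a projective ($\Imod p$-type) object becomes zero. Everything else is formal.
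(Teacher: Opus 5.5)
Your proposal is correct and follows the paper's route. You realize $\semisimplify_{G,\alpha}$ as the case $\Hopf=\fchar G$ of \eqref{formula:YDfunctor}, identify $\semisimplify(\fchar G)$ with $\fchar H$ via Lemma~\ref{lm:permutationRep}, and then identify Yetter--Drinfeld modules over the trivial object $\fchar H$ in $\Ver_p$ with the Deligne product; your checks of the transported Hopf structure (using negligible morphisms) and of the braiding fill in details the paper leaves implicit.
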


\begin{rem}
The functor $\semisimplify_{G,\alpha}$ is $\fchar$-linear braided monoidal, and in particular it sends (graded) braided Hopf algebras to (graded)\footnote{It is well-known that additive functors between
additive categories preserve biproducts.} braided Hopf algebras. However, the image $\semisimplify_{G,\alpha}(\NA (V))$ of a (finite-dimensional) Nichols algebra 
need not be generated in degree one nor need  all of its primitives be in degree one. 
Indeed, $\semisimplify$ is not faithful, and in particular the image of $\semisimplify_{G,\alpha}$ 
of the degree one part of a Nichols algebra can be 0.
\end{rem}

We give a convenient version to use: First, if the group of automorphisms $A$ is inner, that is, given by conjugation with a subgroup $A\subseteq G$, then there is a canonical embedding $\YD{\fchar G}\to \YD{\fchar G}(\Rep A)$ by using the existing action of $A\subseteq G$ on a Yetter--Drinfeld module $V$ once more as a separate action of $A$; clearly the $G$-action is equivariant with respect to this action of $A$ on $V$ and the $G$-coaction is equivariant because of the Yetter--Drinfeld condition.

\begin{cor}
\label{cor:functor_pcase}
Let $G$ be a group and $A=\langle \a\rangle\leq G$ a subgroup of prime order ~$p$, acting on $G$ by conjugation, and $\fchar$ a field of characteristic $p$. Then there is a braided monoidal functor and a commutative diagram, with $H=\Cent_G(\a)$
\begin{equation}
\begin{tikzcd}[column sep=1cm]
\YD{\fchar G}
\arrow[r,"\semisimplify_{G,\alpha}"]
\arrow[d]
& 
\YD{\fchar H}\boxtimes \Ver_p
\arrow[d]
\\
\Rep A \arrow[r,"\semisimplify"] & \Ver_p
\end{tikzcd}
\end{equation}
\end{cor}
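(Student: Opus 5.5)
The plan is to obtain the corollary by composing the canonical embedding $\YD{\fchar G}\to \YD{\fchar G}(\Rep A)$, described just before the statement, with the functor $\semisimplify_{G,\alpha}$ of Theorem~\ref{thm:YDfunctor}. Here $\alpha$ denotes conjugation by $\a$ and $A$ is identified with $\langle\alpha\rangle\leq\operatorname{Aut}G$. Three preliminary points come first:
\begin{enumerate}[label=(\alph*)]
\item $\alpha$ is trivial exactly when $\a$ is central. In that case $\alpha$ does not have order $p$, so Theorem~\ref{thm:YDfunctor} cannot be quoted directly. However, every $A$-orbit on $G$ is then a singleton, and Lemma~\ref{lm:permutationRep} still gives $\semisimplify(\fchar G)=\fchar G=\fchar H$ with $H=G=\Cent_G(\a)$. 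The general construction \eqref{formula:YDfunctor} applies verbatim with $A$ acting through the trivial quotient. It is cleanest to run the whole argument with $A=\langle \a\rangle\simeq C_p$ acting on $G$ through conjugation, whether or not this action is faithful.
\item The fixed points satisfy $G^A=\{g\in G:\a g\a^{-1}=g\}=\Cent_G(\a)=H$.
\item The bottom arrow is $\semisimplify\colon\Rep A\to\ssRep A=\Ver_p$.
\end{enumerate}

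Next I verify that the embedding $\iota\colon\YD{\fchar G}\to\YD{\fchar G}(\Rep A)$ is a braided monoidal functor. Given $V\in\YD{\fchar G}$, let $A$ act on $V$ through $\a\in G$. The action map $\fchar G\otimes V\to V$ is $A$-equivariant, where $A$ acts on $\fchar G$ by conjugation: indeed $\a(gv)=(\a g\a^{-1})(\a v)$. The coaction is equivariant because the Yetter--Drinfeld condition gives $\delta(\a v)=\a v_{(-1)}\a^{-1}\otimes \a v_{(0)}$.

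On a tensor product $V\otimes W$, the diagonal $A$-action coincides with the $G$-action of $\a$ on $V\otimes W$, since $\a$ is grouplike. Hence $\iota$ is strict monoidal with identity structure maps. Morphisms of Yetter--Drinfeld modules commute with the action of $\a$, so $\iota$ is well defined on morphisms. The braiding $c(v\otimes w)=v_{(-1)}w\otimes v_{(0)}$ is the same formula in both categories, because in $\YD{\fchar G}(\Rep A)$ the ambient symmetry of $\Rep A$ is the plain flip. So $\iota$ is braided.

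Finally, set $\semisimplify_{G,\alpha}$ on $\YD{\fchar G}$ to be the composite of $\semisimplify_{G,\alpha}\circ\iota$, using the functor of Theorem~\ref{thm:YDfunctor}. It is braided monoidal as a composite of braided monoidal functors. The square commutes for two reasons. The forgetful functor $\YD{\fchar G}\to\Rep A$, which restricts the $G$-action to $A$, equals the forgetful functor $\YD{\fchar G}(\Rep A)\to\Rep A$ composed with $\iota$. The right-hand square \eqref{eq:SGalpha} commutes by Theorem~\ref{thm:YDfunctor}.

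The only real subtlety is the degenerate case where $\a$ is central, discussed in point (a). Beyond that, the main care lies in checking that $\iota$ is braided. This amounts to the observation that the $A$-action is internal to the Yetter--Drinfeld structure, so the structure maps are identities.
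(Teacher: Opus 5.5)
Your proposal is correct and takes the paper's route: compose the canonical embedding $\YD{\fchar G}\to\YD{\fchar G}(\Rep A)$, which reuses the $G$-action of $\a$ as the $A$-action, with the functor of Theorem~\ref{thm:YDfunctor}. The paper leaves implicit two details you supply: the check that this embedding is strict braided monoidal, and the observation that for central $\a$ conjugation has order $1$ rather than $p$, so the general construction must be run with a non-faithful $A$-action.
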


To further understand our construction we recall a general construction in~\cite[Corollary 4.11]{MR5091303} in the special case of $G$-Yetter--Drinfeld modules:
Over any field $\fchar$ and for any subgroup $H\subseteq G$ there is a braided lax monoidal functor 
$$ \Res:\,\YD{\fchar G}\to \YD{\fchar H}$$
such that the $H$-grading of $\Res(V)$ for $V\in\YD{\fchar G}$ is given by $\Res(V)_h=V_h$ for all $h\in H$,
the $\fchar H$-action on $\Res(V)$ is inherited from the $\fchar H$-action on $V$, and morphisms are sent to their restriction to $\Res(V)$. 
The functor is lax monoidal, because the vertical arrows in the following diagram
\begin{equation}\label{formula:lax}
\begin{tikzcd}[column sep=.2cm]
\Res\left(\bigoplus_{g\in G}V_g\right)
\otimes 
\Res\left(\bigoplus_{h\in G}W_h\right)
\arrow[r,phantom,"="]
\arrow[d,hook, dashed]
& 
\bigoplus_{g,h\in H} V_g\otimes W_h
\arrow[d,hook]
\\
\Res\left(\left(\bigoplus_{g\in G}V_g\right)\otimes\left(\bigoplus_{h\in G}W_h\right)\right)
\arrow[r,phantom,"="]
&
\bigoplus_{\substack{g,h\in G \\gh\in H}}V_g\otimes W_h
\end{tikzcd}
\end{equation}
 are typically not isomorphisms. Indeed, in general $gh\in H$ does not imply $g,h\in H$.

\begin{lem}\label{lm:functorWithRes}
Let $\a  \in G$ be of order $p$ and $A=\langle \a\rangle$. The braided monoidal functor $\semisimplify_{G,\a}$ in Corollary~\ref{cor:functor_pcase} fits into the following commutative diagram for $H=\Cent_G(a)$:

\begin{center}
\begin{tikzcd}[column sep=normal]
\YD{\fchar G}
\arrow[r]
\arrow[d,"\Res"]
&
\YD{\fchar G}(\Rep A)
\arrow[r,"\semisimplify_{G,\a}"]
\arrow[d,""] 
&
\YD{\fchar H}\boxtimes \Ver_p
\\
\YD{\fchar H}
\arrow[r]
&
\YD{\fchar H}(\Rep A)
\arrow[r,"\sim"]
&
\YD{\fchar H}\boxtimes \Rep A
\arrow[u,"\id\boxtimes \semisimplify"]
\end{tikzcd}
\end{center}
\end{lem}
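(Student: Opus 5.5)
The plan is to build a natural isomorphism between the two composites, starting from the $G$-grading of a Yetter--Drinfeld module. For $V\in\YD{\fchar G}$ we write $V=V_H\oplus V_{G\setminus H}$, where $V_H=\bigoplus_{h\in H}V_h=\Res(V)$ as a vector space and $V_{G\setminus H}=\bigoplus_{g\notin H}V_g$.

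The first step is to check that this splitting is $A$-stable. By the Yetter--Drinfeld condition $\a\cdot V_g=V_{\a g\a^{-1}}$, so $A$ permutes the homogeneous components along its conjugation orbits on $G$. Both summands are therefore $A$-subrepresentations, and the inclusion $\iota\colon V_H\hookrightarrow V$ and projection $\pi\colon V\to V_H$ are $A$-equivariant. Moreover $\a\in H$, so the $A$-action on $V_H$ is exactly the one used by the lower row for $\Res(V)$.

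The second step is to show that $\semisimplify$ kills the complement. Every $g\notin H$ has an $A$-orbit of size $p$. Hence $\bigoplus_{i}V_{\a^i g\a^{-i}}\cong V_g\otimes\fchar A$ is a free $A$-module, i.e.\ a direct sum of copies of $\Imod p$, and $\semisimplify(V_{G\setminus H})=0$. This is the same argument as in Lemma~\ref{lm:permutationRep}. Consequently $\semisimplify(\iota)$ and $\semisimplify(\pi)$ are mutually inverse isomorphisms $\semisimplify(V_H)\cong\semisimplify(V)$ in $\Ver_p$. They are natural in $V$, because morphisms of Yetter--Drinfeld modules preserve the grading.

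The third step is to check that this isomorphism respects the $\fchar H$-action and coaction. The $H$-action on $V$ preserves $V_H$, and the $\fchar H$-action on $\semisimplify(V)$ is obtained by restricting $\semisimplify$ of the $\fchar G$-action along $\fchar H=\semisimplify(\fchar G)$; so the action is transported by $\iota$. For the coaction, $\semisimplify(\delta)$ is composed with $\semisimplify$ of the projection $\fchar G\to\fchar H$ that kills the free orbits. On $V_H$ this gives the restricted $H$-coaction. The part of $\delta$ on $V_{G\setminus H}$ lands in $\fchar(G\setminus H)\otimes V$, which is killed anyway. Hence both composites in the diagram agree on objects and morphisms up to this natural isomorphism.

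The fourth step, which I expect to be the main obstacle, is monoidality and compatibility with the braiding, since $\Res$ is only lax monoidal. I would apply $\semisimplify$ to the dashed inclusion in \eqref{formula:lax}. Its complement is $\bigoplus V_g\otimes W_h$ over pairs with $gh\in H$ but $g,h\notin H$. The group $A$ acts on such pairs by simultaneous conjugation, and all orbits have size $p$ since $g\notin H$; so this complement is again a free $A$-module and is sent to zero. Thus after $\id\boxtimes\semisimplify$ the lax structure maps become isomorphisms, and they coincide with the monoidal structure of $\semisimplify_{G,\a}$ transported through $\semisimplify(\iota)$. Finally, the braiding of $\Res$ is the restriction of $c_{V,W}$ to $V_H\otimes W_H$, so it is compatible as well. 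The remaining coherence checks are routine diagram chases using that every map involved is $\semisimplify$ applied to an $A$-equivariant inclusion or projection onto graded pieces.
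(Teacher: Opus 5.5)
Your proposal is correct and follows the paper's own argument: split $V=\Res(V)\oplus\bigoplus_{g\notin \Cent_G(\a)}V_g$, note that $A$ permutes the components off $\Cent_G(\a)$ in free orbits so that $\semisimplify$ kills them, and conclude that $\semisimplify_{G,\a}$ factors through $\Res$. Your fourth step shows that the lax structure maps of $\Res$ become isomorphisms, because the pairs $(g,h)$ with $g,h\notin\Cent_G(\a)$ and $gh\in\Cent_G(\a)$ also form free $A$-orbits; this is exactly the content of Remark~\ref{rem:BecomesStrict}, and you simply spell out the action, coaction and monoidal compatibility checks that the paper leaves implicit.
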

\begin{proof}
Let $V\in \YD{\fchar G}$. 
On $\Res(V)=\bigoplus_{h\in \Cent_G(\a)}V_h$ the action of $A$ commutes with the Yetter--Drinfeld structure over $\fchar H$, giving rise to the interpretation of $\Res(V)$ as a Deligne product.

For any $g\in G$ with $g\not\in \Cent_G(\a)$ the summands $V_{\a^i g \a^{-i}}$ with $0\le i<p$ are permuted by the action of $A$. Hence by Lemma~\ref{lm:permutationRep} such summands are mapped by $\semisimplify$ to zero. As a consequence, the functor $\semisimplify_{G,\a}$ factors over $\Res$ to a functor (diagonal in the diagram)  
$$\YD{\fchar H}(\Rep A) 
\cong \YD{\fchar H}\boxtimes \Rep A
\rightarrow \YD{\fchar H}\boxtimes \Ver_p,$$
which obviously coincides with the functor $\semisimplify$ on the second factor.
\end{proof}

\begin{rem}\label{rem:BecomesStrict}
It is surprising that the braided lax monoidal functor $\Res$ becomes strong monoidal when composed with $\semisimplify$. Indeed, for all $g,h\in G\setminus \Cent_G(\a)$ with $gh\in \Cent_G(\a)$, the set of all pairs $(\a ^ig\a^{-i},\a ^ih\a^{-i})$, $0\le i<p$, in the direct sum \eqref{formula:lax} forms permutation representations under the diagonal conjugation action of $\a$. This representation of $\fchar C_p$ is sent to zero by~$\semisimplify$. 
\end{rem}

We spell out a concrete example in the setting of Lemma~\ref{lm:functorWithRes}.

\begin{exa}\label{exa_needed4FK}
Let $V=M(g,X)=\fchar G\otimes_{\fchar \Cent_G(g)}X$ for some $g\in G$ and a representation $X$ of $\Cent_G(g)$.
Then $\semisimplify_{G,\a}(M(g,X))$ can be described explicitly as follows.
Write the intersection $g^G\cap H$ of the conjugacy class of $g$ in $G$ with $H=\Cent_G(\a)$ as a disjoint union
\[ g^G\cap H= \bigcup_ig_i^H \]
of conjugacy classes of $H$.
For each $i$ choose a representation $X_i$ of $\Cent_G(g_i)$ which is $G$-conjugate to the representation $X$ of $\Cent_G(g)$, and restrict it to $\Cent_H(g_i)$. Then
$$\Res(M(g,X))=\bigoplus_{i} M(g_i,X_i)$$
is an $H$-representation, with $X_i$ considered as a $\Cent_H(g_i)$-module, and these actions commute with the action of $C_p=\langle\a\rangle$.

Consider now a summand $M(g_i,X_i)$. The $C_p$-representation $X_i$ decomposes (non-canonically) as $X_i=\bigoplus _{n=1}^p V_{in}\boxtimes \Imod n$, and each endomorphism $f$ of $X_i$ (including those induced by the endomorphisms of $M(g_i,X_i)$) is a sum of endomorphisms
\[ f=\sum_{n,m=1}^p f_{nm} \]
with $f_{nm}:V_{in}\boxtimes \Imod n\to V_{im}\boxtimes \Imod m$. For all $C_p$-homomorphisms 
\[ g:V_{in}\boxtimes \Imod n\to V_{im}\boxtimes \Imod m, \quad h:V_{im}\boxtimes \Imod m\to V_{in}\boxtimes \Imod n \]
with $m\ne n$ the trace of $gh$ is zero. Hence $\semisimplify (f_{nm})=0$ whenever $n\ne m$.
In the same way it follows that
$$(\id\boxtimes \semisimplify)(X_i)=\bigoplus_{n=1}^{p-1} X_i^{(n)}\boxtimes L_n
\;\in\; \Rep(\Cent_H(g_i))\boxtimes\Ver_p $$
and
\[ \semisimplify(f)=\sum_{n=1}^{p-1}\semisimplify(f_{nn})\in \bigoplus_{n=1}^{p-1}\mathrm{End} (X_i^{(n)})\boxtimes \id_{L_n}.\]
The $\Cent_H(g_i)$-representations  $X_i^{(n)}$ can also be realized as subquotients of the $a$-invariants or the $a$-coinvariants of $X_i$, see \cite[Example 4.7]{MR4237968}.
Alternatively, consider $f_{nn}$ as an endomorphism of $V_{in}$ with coefficients in $\fchar[a]$, by  and specialize it to $a=1$. 
Altogether 
\begin{align*}
\semisimplify_{G,\a}:\quad
\YD{\fchar G}&\longrightarrow \YD{\fchar H}\boxtimes \Ver_p,\\
M(g,X)
&\longmapsto 
\bigoplus_i\bigoplus_{n=1}^{p-1}
M(g_i,X_i^{(n)})\boxtimes \semisimplify(\Imod n).
\end{align*}
In particular for $p=2$ we get representations $X_i^{(1)}$ of $\Cent_H(g_i)$ in vector spaces and for $p=3$ we get representations of $\Cent_H(g_i)$ in super vector spaces with even part $X_i^{(1)}$ and odd part $X_i^{(2)}$, respectively. As explained at the very beginning of this section, for $p=3$ we can compose our functor further with the braided monoidal embedding $\sVect\hookrightarrow \YD{\fchar C_2}$. Then we can recombine the Deligne product and the composite functor reads

\begin{equation} 
\label{formula:pthree}
\begin{aligned}
\semisimplify_{G,\a}:\quad
\YD{\fchar G}&\longrightarrow \YD{\fchar(H\times C_2)},\\
M(g,X)
&\longmapsto 
\bigoplus_i
M(g_i,X_i^{(1)}) \,\oplus\,
M(g_i\pi,X_i^{(2)}\varpi).
\end{aligned}
\end{equation}
where $\pi$ generates $C_2$ and $\varpi$ is the sign representation of $C_2$.
\end{exa}

\section{The \texorpdfstring{$\FK{3}$}{FK3}-case: a toy example}

Consider $G=\SG 3$ and the Yetter--Drinfeld module $V_\fie=M((12),\sgn)$ with  $\sgn$ the nontrivial $1$-dimensional representation of the centralizer $\Cent_G((12))=\langle (12)\rangle$ given by $\sgn((12))=-1$. 
The Nichols algebra $\NA(V_\fie)$ is described in \cite[Examples 6.2 and 6.4(1)]{MR1800714} and coincides with the Fomin--Kirillov algebra $\FK 3$. 

Set $a=(123)$ with $H=\Cent_G(a)=\langle (123)\rangle$ and $\pfie 3$ the prime field of characteristic~$3$. We fix the $\Z$-order $V_\Z$ given by the $\Z$-span of  the basis $x_{12},x_{23},x_{13}$, and reduce modulo $3$ to $V:=\pfie 3\otimes_\Z V_\Z$, so in view of Lemma \ref{lem:inequality}
\[ \pfie3 \ot_\Z \NA (V_\Z) \twoheadrightarrow
\NA(V).
\]
The semisimplification functor in Equation~\eqref{formula:pthree}
yields an $\N_0$-graded Hopf algebra 
\[\overline{\NA}=
\semisimplify_{\SG 3,a}\big({\pfie3 \ot_\Z \NA (V_\Z)}\big)
\;\;\in\;\;
\YD{\pfie3 (H\times C_2)}.
\]
Note that $\semisimplify_{\SG 3,a}(V)=0$ since $H\cap (12)^{\SG 3}=\emptyset$.
Hence $\semisimplify_{\SG 3,a}(\NAn 2(V))$ consists of primitive elements. Explicitly,
$U_\fie=\NA^2(V_\fie)$ has $2$-dimensional homogeneous components of $\SG 3$-degree $(123)$ and $(132)$ respectively, spanned by the monomials
\begin{align*}    
&x_{12}x_{23},&& x_{23}x_{13},&& x_{13}x_{12}=x_{12}x_{23}-x_{23}x_{13},\\
&x_{23}x_{12},&& x_{12}x_{13},&& x_{13}x_{23}=x_{23}x_{12}-x_{12}x_{13}.
\end{align*}
see e.g. \cite[(6.6)]{MR1800714}.
We compute the action of $a=(123)$ on
these two homogeneous components in the given $\Z$-basis for $U_\Z=\NA^2(V_\Z)$
\[
\begin{pmatrix}
0 & 1 \\
-1 & -1
\end{pmatrix},\qquad 
\begin{pmatrix}
-1 & -1 \\
1 & 0
\end{pmatrix}.
\]
In both cases, this is the unique 
simple
$2$-dimensional $C_3$-representation $Z$ over the ring $\Z$, so $U_\Z=M((123),Z)\oplus M((132),Z)$. The reduction modulo $3$ of $Z$ is the indecomposable $\Imod 2$, and its semisimplification is $L_2$ in $\Ver_3$, which we identified with the $1$-dimensional odd vector space in $\sVect$ or the $C_2$-Yetter--Drinfeld module $M(\pi,\varpi)$. Altogether by Example~\ref{exa_needed4FK},
semisimplification yields a Yetter--Drinfeld module
\[
\overline{U}=\semisimplify_{\SG 3,a} \big(\pfie 3 \ot_\Z \NA^2 (V_\Z)\big)
=M\big((123)\pi,\varpi\big) \oplus 
M\big((132)\pi,\varpi\big),
\]
with trivial action of $H$. Its Nichols algebra is an
$\N_0$-graded exterior algebra $\NA (\overline{U})=\Lambda \overline{U}$
with $\deg \overline U=2$.
Since $\NAn n(\FK 3)=0$ for $n\ge 5$, it follows that $\semisimplify_{\SG 3,a}(\FK 3)=\NA (\overline U)$.

\section{The proof of Theorem~\ref{thm:main}}

We prepare the proof of one of our key results with some lemmas.

\begin{lem}
\label{lem:rack_homomorphism}
Let $X$ and $Y$ be conjugacy classes of groups $G$ and $H$, respectively. If $G=\langle X\rangle$, $H=\langle Y\rangle $, and 
$Z(H)$ is trivial, then any surjective
rack homomorphism $X\to Y$ 
extends uniquely to a surjective group homomorphism 
$G\to H$.
\end{lem}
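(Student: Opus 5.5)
Uniqueness is immediate: any group homomorphism extending $f\colon X\to Y$ is determined on $X$, and $X$ generates $G$. Surjectivity of an extension is also immediate, since its image contains $f(X)=Y$, which generates $H$. The real content is existence. I will build the extension through the inner group of $Y$. Let $\varphi\colon H\to \operatorname{Inn}(Y)\le \operatorname{Sym}(Y)$ send $h$ to conjugation by $h$ restricted to $Y$. Because $Y$ generates $H$, an element acting trivially on $Y$ commutes with a generating set, so it is central. Hence $\ker\varphi=Z(H)=1$, and $\varphi$ is an isomorphism of $H$ onto the subgroup of $\operatorname{Sym}(Y)$ generated by the maps $\phi_y=(z\mapsto yzy^{-1})$, $y\in Y$.

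Next I define a homomorphism $\Phi\colon G\to\operatorname{Sym}(Y)$ with $\Phi(x)=\phi_{f(x)}$ for $x\in X$. To do this, I would use the action of $G$ on $X$ by conjugation and push it through $f$. Since $f$ is a rack homomorphism, $f(xx'x^{-1})=f(x)f(x')f(x)^{-1}$. Define $\Phi(g)(f(x'))=f(gx'g^{-1})$ for $g\in G$ and $x'\in X$.

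The key step, and the main obstacle, is well-definedness: if $f(x'_1)=f(x'_2)$, I need $f(gx'_1g^{-1})=f(gx'_2g^{-1})$ for all $g$. It suffices to check this for generators $g=x^{\pm1}$ with $x\in X$. For $g=x$ it follows from the rack homomorphism property, since $f(xx'_ix^{-1})=\phi_{f(x)}(f(x'_i))$ depends only on $f(x'_i)$. For $g=x^{-1}$ I would use that $x$ has finite order, because $G$ is finite, so $x^{-1}$ is a positive power of $x$. For general $g$, write it as a product of elements of $X$ and induct. Surjectivity of $f$ ensures that $\Phi(g)$ is defined on all of $Y$. It is a bijection because $\Phi(g^{-1})$ is its inverse, and $\Phi$ is a homomorphism because it is induced from the conjugation action. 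By construction $\Phi(x)=\phi_{f(x)}$.

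Finally, $\Phi(G)$ is generated by the $\phi_{f(x)}$, and as $f$ is surjective these are all the $\phi_y$. So $\Phi(G)=\varphi(H)$, and $F=\varphi^{-1}\circ\Phi\colon G\to H$ is a group homomorphism with $F(x)=y$, where $y=f(x)$. Indeed $\varphi(y)=\phi_y=\Phi(x)$, and $\varphi$ is injective. This gives the extension, and it is surjective by the remark at the start.
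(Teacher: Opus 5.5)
Your construction is the paper's argument, except that you prove the cited ingredient by hand. The paper composes the conjugation map $G\to\operatorname{Inn}(X)$ with the homomorphism $\operatorname{Inn}(X)\to\operatorname{Inn}(Y)$ induced by a surjective rack morphism \cite[Corollary~1.10]{MR1994219}. It then identifies $\operatorname{Inn}(Y)\cong H$ via conjugation, using $Z(H)=1$. Your $\Phi$ is precisely the composite $G\to\operatorname{Inn}(X)\to\operatorname{Inn}(Y)$, and your well-definedness check is the content of that corollary.

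One step needs repair. The lemma is stated for arbitrary groups, not finite ones. For infinite $G$ an element $x\in X$ may have infinite order, so $x^{-1}$ need not be a positive power of $x$, and your treatment of $g=x^{-1}$ does not apply.

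Finiteness is not actually needed. Since $X$ is a conjugacy class, $x^{-1}x'x\in X$, and the rack property gives $\phi_{f(x)}\bigl(f(x^{-1}x'x)\bigr)=f(x')$. Hence $f(x^{-1}x'x)=\phi_{f(x)}^{-1}(f(x'))$, which depends only on $f(x')$ because $\phi_{f(x)}$ is a bijection of $Y$. With this, run your induction over words in $X\cup X^{-1}$.

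The remaining steps are correct as written: $\ker\varphi=Z(H)$, $\Phi$ is a homomorphism with $\Phi(x)=\phi_{f(x)}$, $\Phi(G)=\varphi(H)$, and uniqueness and surjectivity hold. In the paper's only application, Proposition~\ref{pro:char3}, the group $G$ is finite, so your version already covers what is needed there.
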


\begin{proof}
Uniqueness follows from $G=\langle X\rangle$.
Since $G=\langle X\rangle$ and $H=\langle Y\rangle$, conjugation induces surjective group homomorphisms $\pi_X:G\to \operatorname{Inn}(X)$ and
$\pi_Y:H\to \operatorname{Inn}(Y)$. Since $Z(H)$ is trivial, $\pi_Y$ is an isomorphism. 
Thus the claim follows by composing $\pi_X$ with
the surjective homomorphism 
$\operatorname{Inn}(X)\to H$ in~\cite[Corollary 1.10]{MR1994219}.
\end{proof}

In the next lemma, for each subgroup $H$ of a group $G$ and for each $x\in G$ we write $x^H$ for the orbit of the conjugation action of $H$ on $x$.

\begin{lem}\label{lem:kercapGG}
Let $f\colon G\to Q$ be a surjective group
homomorphism such that $\ker f$ intersects $[G,G]$ trivially.
Then for every $a\in G$,
\[
C_G(a)=f^{-1}(C_Q(f(a))).
\]
Moreover, for each $x\in G$, $f$ induces a bijection between $x^{C_G(a)}$ and $f(x)^{C_Q(f(a))}$.
\end{lem}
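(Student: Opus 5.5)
The key observation is that commutators of elements of $G$ lie in $[G,G]$, and $f$ is injective on $[G,G]$ because $\ker f\cap[G,G]=1$. I will use this twice: once to identify the centralizers, and once to get injectivity on conjugacy orbits.

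\emph{Centralizers.} The inclusion $C_G(a)\subseteq f^{-1}(C_Q(f(a)))$ is immediate. Conversely, let $g\in G$ with $f(g)$ commuting with $f(a)$. Then $f([g,a])=[f(g),f(a)]=1$, so $[g,a]=gag^{-1}a^{-1}\in\ker f\cap[G,G]=1$. Hence $g\in C_G(a)$, which proves $C_G(a)=f^{-1}(C_Q(f(a)))$.

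\emph{Surjectivity on orbits.} Fix $x\in G$ and consider the map $x^{C_G(a)}\to f(x)^{C_Q(f(a))}$ given by $cxc^{-1}\mapsto f(c)f(x)f(c)^{-1}$. It is well defined and lands in the right orbit, since $f(c)\in C_Q(f(a))$ for $c\in C_G(a)$. It is surjective: any $q\in C_Q(f(a))$ lifts, because $f$ is onto, to some $c\in G$. By the first part $c\in C_G(a)$, and $f(cxc^{-1})=qf(x)q^{-1}$.

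\emph{Injectivity on orbits.} Suppose $c,d\in C_G(a)$ satisfy $f(cxc^{-1})=f(dxd^{-1})$. Put $e=d^{-1}c$, so that $f(exe^{-1})=f(x)$. Then $exe^{-1}x^{-1}=[e,x]$ lies in $\ker f\cap[G,G]=1$. Thus $exe^{-1}=x$, and therefore $cxc^{-1}=dxd^{-1}$.

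The argument is elementary, so no step is a real obstacle. The one point to get right is that both conditions we need to check, namely $[g,a]=1$ and $[e,x]=1$, are statements about commutators. Only because they lie in $[G,G]$ can the trivial intersection with $\ker f$ be applied. No finiteness of $G$ is needed.
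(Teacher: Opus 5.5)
Your proposal is correct and follows essentially the same route as the paper: it uses the same commutator argument for the centralizers and gets surjectivity on orbits from $f(C_G(a))=C_Q(f(a))$. The only cosmetic difference is that the paper proves injectivity on the whole class $x^G$ by noting $uv^{-1}\in[G,G]$ for conjugates $u,v$, while you reduce directly to the single commutator $[d^{-1}c,x]$.
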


\begin{proof}
If $f(g)$ commutes with $f(a)$, then
$gag^{-1}a^{-1}\in\ker f\cap[G,G]=\{1\}$. This implies the first claim.
Let now $x\in G$. Then $f\colon x^G\to f(x)^Q$ is surjective and injective:
if $f(u)=f(v)$, then $uv^{-1}\in\ker f\cap[G,G]=\{1\}$, so $u=v$.
Since $f(C_G(a))=C_Q(f(a))$, the claim follows.
\end{proof}

\begin{lem}
\label{lem:G_properties}
  Let $\varphi: G\to \SG 6$ be a surjective group homomorphism, and let $X$ be a conjugacy class of $G$. Assume that 
  $G=\langle X\rangle $ 
  and that 
  $\varphi|_X:X\to (12)^{\SG 6}$ is bijective.
  \begin{enumerate}
      \item The map $\varphi|_X:X\to (12)^{\SG 6}$ is an isomorphism of racks.
      \item $\ker \varphi \cap [G,G]=\{1\}$.
      In particular,
      \[ xy=yx \text{ in $G$} \Longleftrightarrow  \varphi(x)\varphi(y)=\varphi(y)\varphi(x) \text{ in $\SG 6$.}
      \]
      \item
      For each $\a,x\in G$,
      $\varphi$ restricts to a bijection $x^{C_G(\a)}\to \varphi(x)^{C_{\SG 6}(\varphi(a))}$.
     \item Let $\a\in [G,G]$ with $\varphi(\a)=(123)(456)$. Then
      \[C_G(\a)=\varphi^{-1}\big(\langle (123),(456),(14)(25)(36)\rangle\big). \]
  \end{enumerate}
\end{lem}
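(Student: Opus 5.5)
Item (1) comes first. Since $X$ is a conjugacy class, the rack structure on $X$ is conjugation, and $\varphi$ is a group homomorphism. Hence $\varphi(xyx^{-1})=\varphi(x)\varphi(y)\varphi(x)^{-1}$ for all $x,y\in X$. So $\varphi|_X$ is a rack homomorphism, and being bijective it is a rack isomorphism.

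For (2), the plan is to construct a group homomorphism $\psi\colon \SG 6\to G$ with $\varphi\circ\psi=\id$ on $X$-images. To do this, apply Lemma~\ref{lem:rack_homomorphism} in the other direction: take $X$ and $Y=(12)^{\SG 6}$ with the rack isomorphism $(\varphi|_X)^{-1}\colon Y\to X$. The trouble is that Lemma~\ref{lem:rack_homomorphism} needs the target to have trivial center, and $Z(G)$ need not be trivial. So we instead argue directly via the enveloping-group viewpoint.
\begin{itemize}
\item $G$ is a quotient of the enveloping group $G_X=\langle e_x : e_xe_ye_x^{-1}=e_{xyx^{-1}}\rangle$.
\item $[G_X,G_X]$ is isomorphic to $[\SG 6,\SG 6]$ for $X$ the transposition class. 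Equivalently, the kernel of $G_X\to\SG 6$ is central and generated by $e_x^2$-type elements, with $G_X\cong$ a central extension where $[G_X,G_X]\cong A_6$. This holds because the rack is indecomposable, and the kernel of $G_X\to \operatorname{Inn}(X)$ meets the commutator subgroup in a quotient of the Schur multiplier part. Here a direct computation is needed: the Coxeter-type presentation of $\SG 6$ by transpositions shows that the only extra relation is $e_x^2=e_y^2$ central.
\end{itemize}
Concretely, I would show that $\ker\varphi$ is central, because it acts trivially on $X$ by conjugation (since $\varphi|_X$ is injective and $\varphi(gxg^{-1})=\varphi(x)$ for $g\in\ker\varphi$) and $G=\langle X\rangle$. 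I would then show $[G,G]\cap\ker\varphi=1$ by exhibiting $[G,G]$ as generated by elements $xy^{-1}$ with $x,y\in X$, and verifying that the Coxeter relations of $\SG 6$ lift: $(xy)^3=1$ for non-commuting $x,y$ and $(xy^{-1})^2$-type relations. This uses the argument that in $G_X$ one has $e_xe_ye_x=e_ye_xe_y$ and the relation $e_x^2$ central, which makes the map $[G_X,G_X]\to A_6$ injective. \textbf{This step is the main obstacle}: it essentially requires knowing that the Schur multiplier part does not survive, i.e.\ that the braid-type relations plus centrality of squares already give $A_6$ (and not $2.A_6$) on the commutator. The commuting criterion then follows as in Lemma~\ref{lem:kercapGG}: if $\varphi(x)$ and $\varphi(y)$ commute, then $xyx^{-1}y^{-1}\in\ker\varphi\cap[G,G]=1$.

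Items (3) and (4) follow from (2) and Lemma~\ref{lem:kercapGG}. Item (3) is exactly the second statement of that lemma. For (4), the first statement gives $C_G(\a)=\varphi^{-1}(C_{\SG 6}((123)(456)))$, and it remains to compute this centralizer in $\SG 6$. It has order $720/40=18$, and it contains $\langle(123),(456)\rangle\cong C_3^2$ together with the swap $(14)(25)(36)$, which conjugates $(123)(456)$ to $(456)(123)$. These generate a group of order $18$, which therefore equals the centralizer.
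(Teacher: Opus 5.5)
Items (1), (3) and (4) are fine, and so is your observation that $\ker\varphi$ is central. The gap is (2), which carries all the content: you flag it yourself as the main obstacle and leave it open, and the concrete route you sketch does not work. Since $\ker\varphi$ is central and $\varphi(x)^2=1$, the element $z:=x^2$ is central and independent of $x\in X$. For non-commuting $x,y\in X$ this gives $xyx=yxy=(xyx^{-1})z$, hence $(xy)^3=z^3$, which need not be $1$. For instance, take $G=\langle (t,c) : t\in(12)^{\SG 6}\rangle\subseteq \SG 6\times C_4$, with $c$ a generator of $C_4$ and $\varphi$ the first projection. It satisfies all hypotheses, yet $(xy)^3=(1,c^2)\neq 1$. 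So the Coxeter relations hold only modulo $\langle z\rangle$. What must be controlled is precisely how $\langle z\rangle$ meets $[G,G]$, and your sketch never addresses this; that is why the ``$A_6$ versus $2.A_6$'' question stays open for you.

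The missing argument is short if you work in $G_X$. (The paper simply imports the exact sequence $1\to\Z\to G_X\to\SG 6\to 1$ from \cite[Proposition~3.2]{MR2799090}.)
\begin{enumerate}
\item In $G_X$, the element $z=e_x^2$ is central and independent of $x$.
\item In $G_X/\langle z\rangle$ the images $\bar e_x$ are involutions with $\bar e_x\bar e_y\bar e_x=\bar e_{xyx}$. Hence the $\bar e_{(i\,i+1)}$ generate and satisfy the Coxeter relations, so $G_X/\langle z\rangle\cong\SG 6$ and $\ker(G_X\to\SG 6)=\langle z\rangle$.
\item The homomorphism $G_X\to\Z$, $e_x\mapsto 1$, sends $z\mapsto 2$, so $\langle z\rangle\cap[G_X,G_X]=1$.
\item Finally, $G=G_X/N$ with $N\subseteq\langle z\rangle$, because the composite $G_X\to G\to\SG 6$ is the canonical map. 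Then $\ker\varphi\cap[G,G]$ is the image of $\langle z\rangle\cap[G_X,G_X]N=(\langle z\rangle\cap[G_X,G_X])N=N$, i.e.\ it is trivial.
\end{enumerate}
This also resolves your worry about a double cover of $A_6$. The defining relation $e_xe_ye_x^{-1}=e_y$ for commuting $x,y$ forces lifts of disjoint transpositions to commute, whereas they anticommute in the double covers of $\SG 6$.
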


\begin{proof}
    (1) is straightforward. (2) By \cite[Proposition 3.2]{MR2799090} there is a short
    exact sequence 
    \[
    1\to\Z\to G_X\to\SG6\to 1,
    \]
    where $G_X$ is the enveloping group of $X$. Since $G=\langle X\rangle$, it follows that 
    $G$ is a cyclic extension of $\SG6$. This implies the claim. 
    (3) follows from (2) and Lemma~\ref{lem:kercapGG}. 
    (4) is a consequence of (3) and of elementary calculations in~$\SG 6$. 
\end{proof}

We will also need a claim on braided Hopf algebras. Recall from \cite[Definition 2.1]{HV} that a tuple $(U_1,\dots,U_n)$ of finite-dimensional absolutely simple objects in $\yd{\fchar H}$ for a field $\fchar$ and a group $H$ is \emph{braid-indecomposable}
if there is no decomposition $\{1,\dots,n\}=I\cup J$ into non-empty disjoint subsets such that $(\id -c^2)(U_i\otimes U_j)=0$ for all $i\in I$, $j\in J$.
Note that if $(\id-c^2)(U_i\otimes U_j)=0$ then $gh=hg$ for all $g\in \supp U_i$, $h\in \supp U_j$.

A subspace $U$ of a braided Hopf algebra is called \emph{primitive} if $\Delta (u)=1\otimes u+u\otimes 1$ for all $u\in U$.

\begin{lem} \label{lem:largeU}
  Let $H$ be a finite group and $B\in \yd{\fchar H}$ be a braided Hopf algebra. 
  Assume that $B$ contains a subspace
  \[ U=U_1\oplus \cdots \oplus U_n \]
  of primitives, where $n\ge 3$, the $U_i$ are absolutely simple objects in $\yd{\fchar H}$, 
  $\dim U_i\ge 2$ for all $i$, and $\dim U_i>2$ for at least one $i$. 
  If $(U_1,\dots,U_n)$ is braid-indecomposable and $\supp U$ generates $H$, 
  then $\dim B=\infty$.
\end{lem}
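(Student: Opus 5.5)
The plan is to argue by contradiction: assume $\dim B<\infty$ and produce a finite-dimensional Nichols algebra to which the classification in \cite{HV} applies. Since $U\subseteq B$ consists of primitives and is a subobject in $\yd{\fchar H}$, the subalgebra $B'$ of $B$ generated by $U$ is a braided Hopf subalgebra. It is connected graded if we grade by word length in $U$ (filtration by powers of $U$, then pass to the associated graded). Its primitives in degree one are exactly $U$. The Nichols algebra $\NA(U)$ is the quotient of $T(U)$ by the largest Hopf ideal generated in degrees $\ge 2$. Hence $B'$ (or its associated graded) surjects onto $\NA(U)$, since $\NA(U)$ is the graded braided Hopf algebra with primitives only in degree one. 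Taking the associated graded of the coradical/length filtration of $B'$ preserves dimension, so $\dim B<\infty$ implies $\dim\NA(U)<\infty$.

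Next, we pass to $H'=\langle\supp U\rangle=H$, which holds by hypothesis, so $U$ is a Yetter--Drinfeld module over a group generated by its support. We also extend scalars to an algebraic closure of $\fchar$; absolute simplicity ensures each $U_i$ stays simple, and $\dim\NA(U)$ is unchanged. Now $(U_1,\dots,U_n)$ is a braid-indecomposable tuple of simple Yetter--Drinfeld modules with $\dim\NA(U_1\oplus\dots\oplus U_n)<\infty$ and $n\ge3$. I then invoke the main classification theorem of \cite{HV}. It lists all such tuples with $\dim\NA<\infty$ over arbitrary fields and groups, assuming the support is non-abelian or at least one $U_i$ has dimension $>1$. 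For rank $n\ge3$, every entry on that list has a component of dimension $1$: the examples are of type $A$ or $B$ or Cartan-like shape with some point-components, and in the rank-$\geq 3$ cases at most one component has dimension $2$ while the others are one-dimensional.

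The contradiction then comes from comparing with the hypotheses: all $\dim U_i\ge2$ and some $\dim U_i>2$, which is excluded by the list, even at the level of the dimension vector (rack sizes). So the main step, and the main obstacle, is the precise citation. One must check that \cite{HV}'s theorem covers arbitrary characteristic, in particular characteristic $3$, which is needed here, and that its hypotheses (braid-indecomposability, absolute simplicity, the support generating the group) match ours. One must then read off from the table that no finite-dimensional case of rank $\ge3$ has all components of dimension $\ge2$ with one of dimension $>2$. If the table is organized by Weyl groupoid type, I would first use the standard result that finite $\dim\NA$ gives a finite Weyl groupoid. Then I would use that a rank-$n$ reflection keeps the dimension multiset constrained; the \cite{HV} list shows that in rank $\ge3$ all but at most one component is one-dimensional.
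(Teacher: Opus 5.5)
Your overall plan is the paper's: pass from $B$ to $\NA(U)$ and invoke the classification of \cite{HV}. Your reduction step is a valid variant of the paper's. You filter the subalgebra $B'$ generated by $U$ by word length. Then $\operatorname{gr}B'$ surjects onto $\NA(U)$, because the kernel of $T(U)\to\operatorname{gr}B'$ is a graded Hopf ideal concentrated in degrees $\ge 2$. The paper instead uses the coradical filtration and the copy of $\NA(U)$ inside the coradically graded Hopf algebra. Either route gives $\dim\NA(U)\le\dim B$. Only the associated graded surjects onto $\NA(U)$; $B'$ itself need not, just as the enveloping algebra of a Lie algebra $\mathfrak g$ does not map onto $S(\mathfrak g)$ extending $\id_{\mathfrak g}$.

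The gap is in the one substantive step, your account of \cite{HV}. You correctly state what has to be read off the list, but your description of the list is wrong. In rank $\ge 3$ the finite-dimensional cases do \emph{not} all contain a one-dimensional component, and it is not true that at most one component is two-dimensional. The list contains braid-indecomposable tuples all of whose components are two-dimensional: chains of two-dimensional Yetter--Drinfeld modules supported on conjugacy classes of size two, with neighbouring supports not commuting. These are higher-rank versions of the rank-two examples over groups such as $\mathbb{D}_4$. This is exactly why the lemma assumes $\dim U_i>2$ for some $i$. Under your description that hypothesis would be superfluous, and your argument would show that these genuinely finite-dimensional Nichols algebras are infinite-dimensional.

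So the final step, as you justify it, is not valid. What excludes $U$ is the component of dimension $>2$, together with $\dim U_i\ge 2$ for all $i$, and that is what must be checked against the list; this is what the paper means by the dimension restrictions being incompatible with the classification. Your closing sketch via reflections, ending with \emph{all but at most one component is one-dimensional}, should be discarded for the same reason.

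Finally, \cite{HV} requires $H$ to be non-abelian. This holds here, since an absolutely simple Yetter--Drinfeld module of dimension $\ge 2$ cannot exist over an abelian group.
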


\begin{proof}
  Since $U$ is a sum of primitives, the subalgebra of $B$ generated by $U$ is a braided Hopf algebra.
  It has a coradical filtration, and the corresponding coradically graded Hopf algebra contains the Nichols algebra of $U$. 
  The dimension restrictions and the braid-indecomposability of $U$ imply that $\dim \NA (U)=\infty$ by the Classification Theorem of \cite{HV}.
  Therefore $\dim B=\infty$.
\end{proof}

One of the main ingredients of our proof of Theorem~\ref{thm:main} is the following proposition.

\begin{pro}
\label{pro:char3}
Let $\fchar$ be a field of characteristic three, $G$ a finite group and $V\in\yd{\fchar G}$. Assume that 
there is an isomorphism $\varphi: \supp V\to (12)^{\SG{6}}$ of racks, and the following conditions hold:
\begin{gather}
\dim V_g=1
\shortintertext{for all $g\in\supp V$, and}
\label{eq:char3_1}\dim\NAn2(V)_{\varphi^{-1}( (12) )\varphi^{-1}( (23))}=2,\\
\label{eq:char3_2}\dim\NAn2(V)_{\varphi^{-1}((12))\varphi^{-1}((34))}=1.
\end{gather}
Then $\dim\NA(V)=\infty$.
\end{pro}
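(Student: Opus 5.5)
The plan is to apply the partial semisimplification functor of Corollary~\ref{cor:functor_pcase} with $p=3$ to $\NA(V)$, obtain a braided Hopf algebra over $\fchar(H\times C_2)$ with a large space of primitives, and then conclude with Lemma~\ref{lem:largeU}.

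\emph{Step 1: normalize $G$ and choose $\a$.} First I would replace $G$ by the subgroup generated by $\supp V$. This does not change $\NA(V)$, and $\supp V$ becomes a conjugacy class $X$ generating $G$. Since $Z(\SG6)=1$, Lemma~\ref{lem:rack_homomorphism} extends $\varphi$ to a surjection $\varphi\colon G\to\SG6$ that is bijective on $X$, so Lemma~\ref{lem:G_properties} applies. I would choose $\a\in[G,G]$ with $\varphi(\a)=(123)(456)$; this is possible since $\varphi([G,G])=A_6$. Then $\a^3\in\ker\varphi\cap[G,G]=\{1\}$, so $\a$ has order $3$. By Lemma~\ref{lem:G_properties}(4), $H=\Cent_G(\a)$ is the preimage of $\langle(123),(456),(14)(25)(36)\rangle$. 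Under $\varphi$, $H$-conjugacy classes in $G$ correspond bijectively to $\Cent_{\SG6}((123)(456))$-classes (Lemma~\ref{lem:G_properties}(3)).

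\emph{Step 2: the semisimplified Hopf algebra and its primitives.} Set $\overline{B}=\semisimplify_{G,\a}(\NA(V))\in\yd{\fchar(H\times C_2)}$, using \eqref{formula:pthree}. This is an $\N_0$-graded braided Hopf algebra. No transposition commutes with $(123)(456)$, so $X\cap H=\emptyset$, and Lemma~\ref{lm:functorWithRes} gives $\overline{B}^1=0$. It follows that every component of $\Delta$ on $\overline{B}^2$ and $\overline{B}^3$ other than $1\otimes u+u\otimes1$ passes through $\overline{B}^1=0$, so $\overline{B}^2$ and $\overline{B}^3$ consist of primitives. By Lemma~\ref{lm:functorWithRes} only the $G$-degrees lying in $H$ survive.

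In degree $2$ these are the lifts of $(123),(132),(456),(465)$. The $H$-classes are $\{(123),(456)\}$ and $\{(132),(465)\}$, swapped by $(14)(25)(36)$. By \eqref{eq:char3_1}, the component of degree (the lift of) $(123)$ is $2$-dimensional. It is spanned by the three products $x_{12}x_{23}$, $x_{23}x_{13}$, $x_{13}x_{12}$, which $\a$ permutes cyclically; hence, as in the $\FK3$ toy example, it is $\Imod2$ as a $C_3$-module. So each class yields an absolutely simple $2$-dimensional object $M(g\pi,\cdot\,\varpi)$.

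In degree $3$ the relevant degrees are the lifts of triple transpositions in $H$. These form the $H$-class $\{(14)(25)(36),(16)(24)(35),(15)(26)(34)\}$ and similar classes. I would compute the $C_3$-module structure of $\NAn3(V)$ at the lift of $(14)(25)(36)$, which $\a$ stabilizes. The products of three transpositions with this product are built from the pieces controlled by \eqref{eq:char3_1} and \eqref{eq:char3_2}; the goal is to show that this homogeneous space is not a multiple of $\Imod3$. That yields a nonzero object $U_3$ supported on a class of size $3$, so $\dim U_3\ge3$.

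\emph{Step 3: conclude.} Take $U=U_1\oplus U_2\oplus U_3\subseteq\overline{B}$ with $n=3$, $\dim U_1=\dim U_2=2$ and $\dim U_3\ge 3$. Braid-indecomposability holds because the $3$-cycles do not commute with the triple transpositions. The support generates $H\times C_2$: the $3$-cycle parts carry $\pi$, and together with the triple transpositions they generate the image of $H$; if needed, I would add the kernel part using Lemma~\ref{lem:G_properties}(2). Lemma~\ref{lem:largeU} then gives $\dim\overline{B}=\infty$. Since $\semisimplify_{G,\a}$ is additive and sends $\Imod n$ to an object of dimension at most $n$, this forces $\dim\NA(V)=\infty$.

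The main obstacle is the degree-$3$ computation. One must determine $\dim\NAn3(V)_g$ and the Jordan type of $\a$ on it using only the hypotheses \eqref{eq:char3_1} and \eqref{eq:char3_2}, and show that a summand $\Imod1$ or $\Imod2$ survives. I would first try to realize $\NAn3(V)_g$ via the braided symmetrizer restricted to the $6$-element orbit-like set of words, and read off the $\a$-action from the explicit relations.
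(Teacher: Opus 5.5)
Your route is the paper's route: the same $\a$ with $\varphi(\a)=(123)(456)$, the functor $\semisimplify_{G,\a}$ killing degree one, two $2$-dimensional odd objects coming from $\Imod2$ in degree $2$, a $3$-dimensional object from degree $3$, and Lemma~\ref{lem:largeU}. However, you leave the degree-$3$ step as a stated goal (``show that this homogeneous space is not a multiple of $\Imod3$''), and this step is the crux. It supplies the third summand and the only summand of dimension $>2$, both of which Lemma~\ref{lem:largeU} requires. The degree-$2$ objects alone have pairwise commuting support, so the lemma does not apply to them. Compare the $\FK3$ example, where the analogous degree-$2$ image only generates an exterior algebra. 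As written, the proof is therefore incomplete exactly where the contradiction is produced.

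The missing argument is short and needs no symmetrizer computation. Three transpositions with product $(14)(25)(36)$ must be disjoint. So, identifying $G$-degrees via $\ker\varphi\cap[G,G]=\{1\}$, the degree-$g_{14}g_{25}g_{36}$ part of $V^{\otimes 3}$ is spanned by the six orderings of $x_{14},x_{25},x_{36}$.

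The $G$-action is transitive on ordered pairs of disjoint transpositions. Transporting \eqref{eq:char3_2} along it gives $\dim\NAn2(V)_{g_{ij}g_{kl}}=1$ for all such pairs. Hence $x_{ij}x_{kl}$ and $x_{kl}x_{ij}$ are proportional in $\NA(V)$, all six degree-$3$ monomials are proportional, and $\dim\NAn3(V)_{g_{14}g_{25}g_{36}}\le 1$. This space is nonzero: the skew derivation dual to $x_{36}$ sends $x_{14}x_{25}x_{36}$ to $x_{14}x_{25}\neq0$.

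A one-dimensional $C_3$-module in characteristic $3$ is $\Imod1$, so it survives semisimplification as an even object. The three triple transpositions in $\Cent_G(\a)$ form a single $\Cent_G(\a)$-class. Hence $\semisimplify_{G,\a}\bigl(G\cdot\NAn3(V)_{g_{14}g_{25}g_{36}}\bigr)$ is a $3$-dimensional absolutely simple object. Absolute simplicity is needed for Lemma~\ref{lem:largeU}, and your weaker target would not give it directly.

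Separately, your claim that the supports generate $H\times C_2$, with the suggestion to ``add the kernel part'', is unjustified and unnecessary. Apply Lemma~\ref{lem:largeU} to the subgroup generated by the three supports instead, as the paper does.
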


\begin{proof}
Let $\widetilde{G}$ be the subgroup of $G$ generated by
$\supp V$. Then the restriction of the action of $G$ on $V$ to
$\widetilde{G}$ yields
a Yetter--Drinfeld module $\Res V\in \yd{\fchar \widetilde{G}}$, and
$\dim \NA(\Res V)=\dim\NA(V)$. Therefore we may assume
that $G$ is generated by $\supp V$.

Let $\varphi\colon\supp V\to (12)^{\SG 6}$ be a rack isomorphism. By
Lemma~\ref{lem:rack_homomorphism},
there is a surjective group homomorphism $\overline\varphi\colon G\to\SG{6}$ with $\overline \varphi|_{\supp V}=\varphi$.

For $1\le i<j\le6$, let $g_{ij}=g_{ji}=\varphi^{-1}((ij))$, and let
$\a=g_{12}g_{23}^{-1}g_{45}g_{56}^{-1}\in G$. Then $\a\in [G,G]$, $\overline \varphi(\a)=(123)(456)$, and 
\[
\a^3\in \ker \overline \varphi \cap [G,G]=\{1\}
\]
by Lemma~\ref{lem:G_properties}(2). 
Hence $\a$ has order three. 


Assume now that $\dim \NA (V)<\infty$. We are going to apply the functor $\semisimplify_{G,\a}$ of
Equation~\eqref{formula:pthree}
to $\NA (V)$, and to conclude the proposition using Lemma~\ref{lem:largeU}.



For each $1\leq i<j\leq 6$, let $x_{ij}\in V$
be a basis vector of $V_{g_{ij}}$. Moreover, for all $n\ge 1$ let
\[ f_n:(\supp V)^n\to \SG 6,\quad (h_1,\dots,h_n)\mapsto
\varphi(h_1)\cdots \varphi(h_n). \]
First note that $\supp V\cap C_G(\a)=\emptyset$, and hence
$\semisimplify_{G,\a}(V)=0$ by Equation~\eqref{formula:pthree} of Example~\ref{exa_needed4FK}. 

Next observe that
\begin{align*}
f_2^{-1}( (123))&=\{ (g_{12},g_{23}),(g_{23},g_{13}),(g_{13},g_{12}) \},\\
f_2^{-1}( (12)(34))&=\{ (g_{12},g_{34}),(g_{34},g_{12}) \}.
\end{align*}
Hence
\[
V^{\otimes2}_{g_{12}g_{23}}=\operatorname{span}\{x_{12}x_{23},x_{23}x_{13},x_{13}x_{12}\},
\quad
V^{\otimes2}_{g_{12}g_{34}}=\operatorname{span}\{x_{12}x_{34},x_{34}x_{12}\}.
\]
In particular, $V^{\otimes2}_{g_{12}g_{23}}$ is a free $\fchar \langle
\a \rangle $-module.
Since
$\dim\,\NAn2(V)_{g_{12}g_{23}}=2$ by the assumption in
\eqref{eq:char3_1}, we conclude that
$\NAn2(V)_{g_{12}g_{23}}$ is an indecomposable $\fchar \langle
\a\rangle $-module.
In a similar way we note that $f_3^{-1}((14)(25)(36))$ consists of the
six triples
containing each of the elements $g_{14}$, $g_{25}$, $g_{36}$ as
entries. Note that
these elements are
pairwise commuting.
Since
$\dim\,\NAn2(V)_{g_{ij}g_{kl}}=1$ for all pairwise distinct
$i,j,k,l$ in $\{1,2,\dots,6\}$
as a consequence of the assumption in \eqref{eq:char3_2},
we conclude that the $\fchar \langle \a \rangle $-module $\NAn3(V)_{g_{14}g_{25}g_{36}}$ 
is 1-dimensional.

Let
\begin{align*}
U_2=G\cdot \NAn2(V)_{g_{12}g_{23}}\in\yd{\fchar G}.
\end{align*}
Note that
\[ (g_{12}g_{23})^G\cap C_G(\a)=\{ g_{12}g_{23},g_{23}g_{12},g_{45}g_{56},
g_{56}g_{45}\}
\]
by 
Lemma~\ref{lem:G_properties}(2). By (3) and (4) of the same lemma, 
\[
(g_{12}g_{23})^{C_G(\a)}=\{g_{12}g_{23}, g_{45}g_{56}\},
\quad 
(g_{23}g_{12})^{C_G(\a)}=\{g_{23}g_{12}, g_{56}g_{45}\}.
\]
Since $\NAn2(V)_{g_{12}g_{23}}$ is indecomposable 2-dimensional 
as an $\fchar\langle\a\rangle$-module, 
Equation~\eqref{formula:pthree} implies that 
\[ 
\semisimplify_{G,\a}(U_2)=\overline{U}_2'\oplus \overline{U}_2''\in\yd{\fchar(C_G(\a)\times C_2)}.
\]
Moreover, both $\overline{U}'_2$ and $\overline{U}''_2$ are
2-dimensional absolutely irreducible objects, and 
\[
\supp\overline{U}_2'=\{g_{12}g_{23}\pi, g_{45}g_{56}\pi\},\quad 
\supp\overline{U}_2''=\{g_{23}g_{12}\pi, g_{56}g_{45}\pi\}.
\]

Similarly, let 
\[
U_3=G\cdot \NAn3(V)_{g_{14}g_{25}g_{36}}\in\yd{\fchar G}. 
\]
As before, 
\[ 
(g_{14}g_{25}g_{36})^G\cap C_G(\a)=\{ g_{14}g_{25}g_{36},g_{15}g_{26}g_{34},g_{16}g_{24}g_{35}\}=(g_{14}g_{25}g_{36})^{C_G(\a)}.
\]
Since $\NAn3(V)_{g_{14}g_{25}g_{36}}$ is 1-dimensional 
as an $\fchar\langle\a\rangle$-module, 
Equation~\eqref{formula:pthree} implies that 
\[ 
\semisimplify_{G,\a}(U_3)=\overline{U}_3\in\yd{\fchar(C_G(\a)\times C_2)}
\]
is a 3-dimensional 
absolutely irreducible object with $\supp\overline{U}_3=(g_{14}g_{25}g_{36})^{C_G(\a)}$.

Note that $\supp\overline{U}_3$ commutes neither with $\supp \overline{U}'_2$ nor with $\supp \overline{U}''_2$ element-wise. Hence
the triple $(\overline{U}_2',\overline{U}_2'',\overline{U}_3)$ is braid-indecomposable. By 
Lemma~\ref{lem:largeU} applied to the group generated by $\supp\overline{U}'_2\cup\supp\overline{U}_2''\cup\supp\overline{U}_3$, 
we conclude that $\dim\semisimplify_{G,\a}(\NA(V))=\infty$. Therefore 
$\dim\NA(V)=\infty$.
\end{proof} 

Recall from \cite[\S6.2]{AHV} that 
for a finite group $G$, a subring $R$ of $\fie$, and
$V \in \yd{\fie G}$, an \emph{$R$-order}
$V_R$ of $V$ is a projective $R$-submodule
of $V$ satisfying $\fie \otimes_R V_R \simeq V$, $G \cdot V_R = V_R$
and $V_R = \bigoplus_{g \in G} \left(V_R \cap V_g\right)$. In what follows we will assume additionally
that $R$ is a Dedekind domain. 

\begin{lem}
\label{lem:inequality}
Let $R$ be a Dedekind domain,
$\fie$ be a field of characteristic zero,
$V\in\yd{\fie G}$ for some group $G$, 
$V_R$ an $R$-order of $V$, 
and 
$\fchar$ be a field. 
Assume that $\fchar$ is an $R$-algebra. 
For all $n\geq0$ and $g\in G$, 
\[ 
\dim \NA ^n(V)_g
\ge \dim \NAn n(\fchar\otimes_RV_R)_g 
\]
\end{lem}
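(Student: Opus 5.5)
We compare both Nichols algebras in degree $n$ through the quantum symmetrizer, which is defined over $R$. Recall that for any braided vector space $W$ one has $\NA^n(W)\simeq T^n(W)/\ker\mathfrak{S}_n = \operatorname{im}\mathfrak{S}_n$. Here $\mathfrak{S}_n=\sum_{\sigma\in\SG n} c_{\sigma}$, and $c_\sigma$ is the lift of $\sigma$ to the braid group via reduced expressions, acting on $W^{\otimes n}$ through the braiding. The braiding $c(x\ot y)=g\cdot y\ot x$ for $x\in W_g$ preserves $G$-degrees, so $\mathfrak{S}_n$ is homogeneous. Hence $\dim\NA^n(W)_g=\operatorname{rank}\,\mathfrak{S}_n|_{(W^{\ot n})_g}$.

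The key steps are as follows.

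First, since $V_R$ is stable under $G$ and graded, the braiding of $V$ restricts to $V_R\ot_R V_R$. Therefore $\mathfrak{S}_n$ restricts to an $R$-linear endomorphism $\mathfrak{S}_n^R$ of the projective, finitely generated $R$-module $(V_R^{\ot n})_g$; this module is a finite direct sum of tensor products of the graded pieces $V_R\cap V_h$. Base change gives two identifications. Tensoring with $\fie$ recovers $\mathfrak{S}_n$ on $(V^{\ot n})_g$, because $\fie\ot_R V_R\simeq V$ compatibly with the grading and the action. Tensoring with $\fchar$ gives the symmetrizer of the braided vector space $\fchar\ot_RV_R\in\yd{\fchar G}$ in degree $g$.

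Second, we reduce to a rank comparison. Since $R$ is a Dedekind domain, hence a domain, we pass to its field of fractions $K$. Because $\fie$ has characteristic zero, $R\subseteq\fie$ and $K\subseteq\fie$. The rank of $\mathfrak{S}_n^R$ over $K$ equals its rank over $\fie$, since rank is invariant under field extension. To avoid working with a non-free projective module, localize at the prime $\mathfrak p=\ker(R\to\fchar)$; then $(V_R^{\ot n})_g$ becomes free over the local ring $R_{\mathfrak p}$, and $\fchar$ is an extension of its residue field. We choose a basis and write $\mathfrak{S}_n^R$ as a matrix $M$ over $R_{\mathfrak p}$. Now $\operatorname{rank}_K M$ is the largest size of a minor of $M$ that is nonzero in $K$. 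The rank of the image of $M$ over $\fchar$ is the largest size of a minor whose image in $\fchar$ is nonzero, and any such minor is already nonzero in $R_{\mathfrak p}\subseteq K$. Hence $\operatorname{rank}_\fchar\le\operatorname{rank}_K=\operatorname{rank}_\fie$. Combined with the first step, this gives the inequality $\dim\NA^n(V)_g\ge\dim\NAn n(\fchar\ot_RV_R)_g$.

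The main obstacle is bookkeeping rather than depth: one must check that the degree-$g$ part of the tensor power commutes with base change. This holds because the grading is a direct sum decomposition into projective pieces, so localization and tensoring with $\fchar$ preserve both the decomposition and the formula for the braiding. If the characteristic of $\fchar$ is zero, the map $R\to\fchar$ still factors through the localization at the prime ideal $\ker(R\to\fchar)$, which may be $0$, and the same argument applies.
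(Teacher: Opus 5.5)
Your proof is correct, but it takes a different route from the paper's.

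\textbf{The paper's route.} The paper argues at the level of Hopf algebras:
\begin{enumerate}
\item It uses the $R$-form $\NA(V_R)$ from \cite{HMV}, with $\fie\ot_R\NA(V_R)\cong\NA(V)$.
\item It notes that $\fchar\ot_R\NA(V_R)$ is a connected graded braided Hopf algebra generated by $\fchar\ot_RV_R$. By the universal property of Nichols algebras it therefore surjects onto $\NA(\fchar\ot_RV_R)$.
\item The pieces $\NA^n(V_R)_g$ are finitely generated and torsion-free, hence projective over the Dedekind domain $R$. So they have constant rank, and $\dim\fchar\ot_R\NA^n(V_R)_g=\dim\NA^n(V)_g$.
\end{enumerate}

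\textbf{Your route.} You stay at the level of linear maps. You realize both sides as ranks of base changes of the single $R$-linear map $\mathfrak{S}_n^R$ on $(V_R^{\ot n})_g$. Then you use lower semicontinuity of matrix rank under the specialization $R_{\mathfrak p}\to\fchar$, via minors.

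\textbf{Comparison.}
\begin{itemize}
\item Your argument is more elementary and self-contained. It needs neither the results of \cite{HMV} nor the universal property of Nichols algebras.
\item It never really uses that $R$ is Dedekind: any domain $R\subseteq\fie$ with $V_R$ finitely generated projective would do.
\item The paper's argument yields more structure: the equality of dimensions for $\fchar\ot_R\NA(V_R)$ together with a surjection of graded braided Hopf algebras $\fchar\ot_R\NA(V_R)\twoheadrightarrow\NA(\fchar\ot_RV_R)$. This is what Section~3 relies on when it applies the semisimplification functor to $\pfie3\ot_\Z\NA(V_\Z)$ rather than to $\NA(V)$.
\end{itemize}

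\textbf{A small correction.} The inclusion $R\subseteq\fie$ does not follow from $\operatorname{char}\fie=0$. It is part of the definition of an $R$-order, where $R$ is a subring of $\fie$. Also, finite generation of $V_R$, which you need to write matrices, follows from $\dim V<\infty$ because $V_R$ is projective of finite rank over a Dedekind domain; it is worth saying this explicitly.
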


\begin{proof}
Recall from \cite[\S3]{HMV} that
\[ \NA (V)=\NA (\fie \ot_R V_R)\cong \fie \ot_R \NA (V_R) \]
as $\N_0$-graded objects in $\yd{\fie G}$.
Moreover, $\fchar \ot_R \NA (V_R)$ is a connected $\N_0$-graded braided Hopf algebra
in $\yd{\fchar G}$ generated by its degree one part $\fchar \ot_R V_R$. Hence there is a surjective morphism
\[ \fchar \ot_R \NA (V_R) \twoheadrightarrow \NA (\fchar \ot_R V_R) \]
in $\yd{\fchar G}$.
Now, since $R$ is a Dedekind domain, for all $n\in \N_0$ and $g\in G$ the rank of
$\NAn n (V_R)_g $ is constant. Therefore
 \[ \dim \NAn n(\fchar \ot_R V_R)_g\le \dim \fchar \ot_R \NAn n (V_R)_g
 =\dim \fie \ot_R \NAn n (V_R)_g=\dim \NAn n (V)_g\]
 for all $n\in \N_0$ and $g\in G$. This proves the claim.
\end{proof}

Now we are ready to prove the main theorem. 

\begin{proof}[Proof of Theorem~\ref{thm:main}]
Let $X$ be the support of $V$. We may assume that $G$ is generated by $X$. 
If $V$ is not absolutely simple, then $\dim\NA(V)=\infty$ by~\cite[Corollary~3.5]{AHV}. 
Assume now that
$V$ is absolutely simple. 

Take $Y\subseteq X$ to be a subrack of $X$ isomorphic to $(12)^{\SG{4}}$, and 
let $U=\bigoplus_{y\in Y}V_y$. Let $H$ be the (finite) subgroup of $G$ generated by $Y$. 
Then $U\in\yd{\fie H}$. If $\dim V_y>1$ for some $y\in Y$, 
then $\dim\NA(U)=\infty$ by~\cite[Theorem~6.14]{AHV} and therefore $\dim\NA(V)=\infty$.

Assume next that $\dim V_y=1$ for all $y\in Y$. For the same reasons as in the previous paragraph,
$U$ has to be as in~\cite[Example~2.21]{AHV}. In particular, using the quadratic relations
(see~\cite[Example~6.4]{MR1800714} and~\cite[Definition~2.1]{MR1667680}) in $\NA (U)$, 
we may additionally assume that
\begin{equation} \label{eq:xy_noncommuting}
\dim(V_xV_y+V_{xyx^{-1}}V_x+V_yV_{xyx^{-1}})=2
\end{equation}
in $\NA(V)$ for any two non-commuting elements $x,y\in Y$, and
\begin{equation} \label{eq:xy_commute}
V_xV_y=V_yV_x\ne\{0\} 
\end{equation}
in $\NA(V)$ for any two distinct commuting elements $x,y\in Y$.

Recall from \cite[Remark~6.10]{AHV} that there exists a Dedekind domain $R\subseteq \fie $ with $R^\times \cap \Z=\{1,-1\}$ and a Yetter--Drinfeld order
$V_R\in \yd{RG}$ such that $V\simeq \fie \otimes_R V_R$.
Let $\mathfrak{m}$ be a maximal ideal of $R$ containing 3, and let $\fchar=R/\mathfrak{m}$. 
Since $R$ is a Dedekind domain and the homogeneous components of $\NA (V_R)$ are finitely generated torsion-free modules, they are projective modules \cite[Proposition~III.4.3]{Neu} and hence have constant rank \cite[Corollary~3.8]{Kun}. In particular,
\[ \dim \NA ^n(V)_g=\dim \fie \otimes_R\NA ^n(V_R)_g=\dim \fchar \otimes _R\NA ^n(V_R)_g
 \ge \dim \NA ^n(\fchar\otimes_RV_R)_g \]
 for all $n\ge 1$ and all $g\in G$ by Lemma~\ref{lem:inequality}. 
Routine calculations in degrees two and three show that $\NA (\fchar \ot_R V_R)$ 
satisfies the assumptions of
Proposition~\ref{pro:char3}, and hence it is infinite-dimensional.
Consequently, $\NA (V)$ is infinite-dimensional. This proves the theorem.
\end{proof}

\begin{rem}
    There is an alternative proof of Theorem~\ref{thm:main}
    based on an involutive automorphism and specialization to a characteristic two field. 
\end{rem}

\subsection*{Acknowledgements}
We thank Jan Ouborny for technical support in designing and implementing the AI prompt.

Part of the work  of N. A. was carried out while he held the position of Visiting Professor at the Vrije Universiteit Brussel (2025-2026). He thanks Leandro Vendramin 
for his friendly hospitality. 

N. A. also thanks Gus Lehrer, Geordie Williamson, and Ruibin Zhang, respectivley Yang Zhang, 
for conversations about $\FK{6}$ (as well as the invitation and warm welcome) 
during a visit to the Sydney Mathematical Research Institute (SMRI) in April and May 2026,
respectively to the University of Queensland in May 2026.

\bibliographystyle{abbrv}
\bibliography{refs.bib}

@article {MR2786171,
    AUTHOR = {Andruskiewitsch, N. and Fantino, F. and Gra\~na, M. and
              Vendramin, L.},
     TITLE = {Finite-dimensional pointed {H}opf algebras with alternating
              groups are trivial},
   JOURNAL = {Ann. Mat. Pura Appl. (4)},
  FJOURNAL = {Annali di Matematica Pura ed Applicata. Series IV},
    VOLUME = {190},
      YEAR = {2011},
    NUMBER = {2},
     PAGES = {225--245},
      ISSN = {0373-3114,1618-1891},
   MRCLASS = {16T05},
  MRNUMBER = {2786171},
MRREVIEWER = {Daniel\ Ion\ Bulacu},
       DOI = {10.1007/s10231-010-0147-0},
       URL = {https://doi.org/10.1007/s10231-010-0147-0},
}

@misc{AHV,
 author = {N. Andruskiewitsch and I. Heckenberger and L. Vendramin},
 title = {Pointed {Hopf} algebras of odd dimension and {Nichols} algebras over solvable groups},
 year = {2024},
 howpublished = {Preprint, {arXiv}:2411.02304 [math.{QA}] (2024)},
 url = {https://arxiv.org/abs/2411.02304},
 arXiv = {arXiv:2411.02304}
}

@article {MR5134613,
    AUTHOR = {Coulembier, Kevin and Flake, Johannes},
     TITLE = {Towards higher {F}robenius functors for symmetric tensor
              categories},
   JOURNAL = {Selecta Math. (N.S.)},
  FJOURNAL = {Selecta Mathematica. New Series},
    VOLUME = {32},
      YEAR = {2026},
    NUMBER = {5},
     PAGES = {Paper No. 98},
      ISSN = {1022-1824,1420-9020},
   MRCLASS = {20B05 (20C20)},
  MRNUMBER = {5134613},
       DOI = {10.1007/s00029-026-01204-3},
       URL = {https://doi.org/10.1007/s00029-026-01204-3},
}

@misc{2609.12895,
 author = {Thomas Creutzig and Simon D. Lentner},
 title = {Affine {$\mathfrak{sl}_2$} at admissible levels and quantum {$\mathfrak{sl}_{2|1}$}},
 year = {2026},
 howpublished = {Preprint, {arXiv}:2609.12895 [math.{QA}] (2026)},
 url = {https://arxiv.org/abs/2609.12895},
 arXiv = {arXiv:2609.12895}
}

@article {HV,
    AUTHOR = {Heckenberger, I. and Vendramin, L.},
     TITLE = {A classification of {N}ichols algebras of semisimple
              {Y}etter-{D}rinfeld modules over non-abelian groups},
   JOURNAL = {J. Eur. Math. Soc. (JEMS)},
  FJOURNAL = {Journal of the European Mathematical Society (JEMS)},
    VOLUME = {19},
      YEAR = {2017},
    NUMBER = {2},
     PAGES = {299--356},
      ISSN = {1435-9855,1435-9863},
   MRCLASS = {16T05 (17B22 20F55)},
  MRNUMBER = {3605018},
MRREVIEWER = {Cristian\ Vay},
       DOI = {10.4171/JEMS/667},
       URL = {https://doi.org/10.4171/JEMS/667},
}

@article {HMV,
    AUTHOR = {Heckenberger, I. and Meir, E. and Vendramin, L.},
     TITLE = {Finite-dimensional {N}ichols algebras of simple
              {Y}etter-{D}rinfeld modules (over groups) of prime dimension},
   JOURNAL = {Adv. Math.},
  FJOURNAL = {Advances in Mathematics},
    VOLUME = {444},
      YEAR = {2024},
     PAGES = {Paper No. 109637, 30},
      ISSN = {0001-8708,1090-2082},
   MRCLASS = {16T05 (18M15)},
  MRNUMBER = {4729697},
MRREVIEWER = {Dong\ Su},
       DOI = {10.1016/j.aim.2024.109637},
       URL = {https://doi.org/10.1016/j.aim.2024.109637},
}

@article {MR3552907,
    AUTHOR = {Blasiak, Jonah and Liu, Ricky Ini and M\'{e}sz\'{a}ros, Karola},
     TITLE = {Subalgebras of the {F}omin-{K}irillov algebra},
   JOURNAL = {J. Algebraic Combin.},
  FJOURNAL = {Journal of Algebraic Combinatorics. An International Journal},
    VOLUME = {44},
      YEAR = {2016},
    NUMBER = {3},
     PAGES = {785--829},
      ISSN = {0925-9899},
   MRCLASS = {05E15 (13D40 16S37 16T05 20F55)},
  MRNUMBER = {3552907},
MRREVIEWER = {Matt S. Davis},
       DOI = {10.1007/s10801-016-0688-4},
       URL = {https://doi.org/10.1007/s10801-016-0688-4},
}

@incollection {MR2799090,
    AUTHOR = {Andruskiewitsch, N. and Fantino, F. and Garc\'{\i}a, G. A. and
              Vendramin, L.},
     TITLE = {On {N}ichols algebras associated to simple racks},
 BOOKTITLE = {Groups, algebras and applications},
    SERIES = {Contemp. Math.},
    VOLUME = {537},
     PAGES = {31--56},
 PUBLISHER = {Amer. Math. Soc., Providence, RI},
      YEAR = {2011},
      ISBN = {978-0-8218-5239-2},
   MRCLASS = {16T05},
  MRNUMBER = {2799090},
MRREVIEWER = {Iv\'{a}n\ Ezequiel\ Angiono},
       DOI = {10.1090/conm/537/10565},
       URL = {https://doi.org/10.1090/conm/537/10565},
}

@article {MR1456522,
    AUTHOR = {Bespalov, Yu. N.},
     TITLE = {Crossed modules and quantum groups in braided categories},
   JOURNAL = {Appl. Categ. Structures},
  FJOURNAL = {Applied Categorical Structures. A Journal Devoted to
              Applications of Categorical Methods in Algebra, Analysis,
              Computer Science, Logic, Order and Topology},
    VOLUME = {5},
      YEAR = {1997},
    NUMBER = {2},
     PAGES = {155--204},
      ISSN = {0927-2852,1572-9095},
   MRCLASS = {18D10 (16W30 81R50)},
  MRNUMBER = {1456522},
MRREVIEWER = {Pilar\ C.\ Carrasco},
       DOI = {10.1023/A:1008674524341},
       URL = {https://doi.org/10.1023/A:1008674524341},
}

@article {MR1659895,
    AUTHOR = {Andruskiewitsch, N. and Schneider, H.-J.},
     TITLE = {Lifting of quantum linear spaces and pointed {H}opf algebras
              of order {$p^3$}},
   JOURNAL = {J. Algebra},
  FJOURNAL = {Journal of Algebra},
    VOLUME = {209},
      YEAR = {1998},
    NUMBER = {2},
     PAGES = {658--691},
      ISSN = {0021-8693},
   MRCLASS = {16W30},
  MRNUMBER = {1659895},
MRREVIEWER = {Stefaan Caenepeel},
       DOI = {10.1006/jabr.1998.7643},
       URL = {https://doi.org/10.1006/jabr.1998.7643},
}

@article {MR4237968,
    AUTHOR = {Etingof, Pavel and Ostrik, Victor},
     TITLE = {On the {F}robenius functor for symmetric tensor categories in
              positive characteristic},
   JOURNAL = {J. Reine Angew. Math.},
  FJOURNAL = {Journal f\"ur die Reine und Angewandte Mathematik. [Crelle's
              Journal]},
    VOLUME = {773},
      YEAR = {2021},
     PAGES = {165--198},
      ISSN = {0075-4102,1435-5345},
   MRCLASS = {18M15 (16T05)},
  MRNUMBER = {4237968},
MRREVIEWER = {Costel\ Gabriel\ Bontea},
       DOI = {10.1515/crelle-2020-0033},
       URL = {https://doi.org/10.1515/crelle-2020-0033},
}

@incollection {MR1800714,
    AUTHOR = {Milinski, Alexander and Schneider, Hans-J\"{u}rgen},
     TITLE = {Pointed indecomposable {H}opf algebras over {C}oxeter groups},
 BOOKTITLE = {New trends in {H}opf algebra theory ({L}a {F}alda, 1999)},
    SERIES = {Contemp. Math.},
    VOLUME = {267},
     PAGES = {215--236},
 PUBLISHER = {Amer. Math. Soc., Providence, RI},
      YEAR = {2000},
   MRCLASS = {16W30 (20F55)},
  MRNUMBER = {1800714},
MRREVIEWER = {Sonia Natale},
       DOI = {10.1090/conm/267/04272},
       URL = {https://doi.org/10.1090/conm/267/04272},
}

@article {MR1994219,
    AUTHOR = {Andruskiewitsch, Nicol\'{a}s and Gra\~{n}a, Mat\'{\i}as},
     TITLE = {From racks to pointed {H}opf algebras},
   JOURNAL = {Adv. Math.},
  FJOURNAL = {Advances in Mathematics},
    VOLUME = {178},
      YEAR = {2003},
    NUMBER = {2},
     PAGES = {177--243},
      ISSN = {0001-8708},
   MRCLASS = {16W30 (17B37 57M27)},
  MRNUMBER = {1994219},
MRREVIEWER = {Ian M. Musson},
       DOI = {10.1016/S0001-8708(02)00071-3},
       URL = {https://doi.org/10.1016/S0001-8708(02)00071-3},
}

@book {MR4164719,
    AUTHOR = {Heckenberger, Istv\'{a}n and Schneider, Hans-J\"{u}rgen},
     TITLE = {Hopf algebras and root systems},
    SERIES = {Mathematical Surveys and Monographs},
    VOLUME = {247},
 PUBLISHER = {American Mathematical Society, Providence, RI},
      YEAR = {[2020] \copyright 2020},
     PAGES = {xix+582},
      ISBN = {978-1-4704-5232-2},
   MRCLASS = {16T05 (16Txx 17B22)},
  MRNUMBER = {4164719},
}

@misc{Barligea,
 author = {Christoph B{\"a}rligea},
 title = {On the dimension of the {Fomin}-{Kirillov} algebra and related algebras},
 year = {2020},
 howpublished = {Preprint, {arXiv}:2001.04597 [math.{QA}] (2020)},
 url = {https://arxiv.org/abs/2001.04597},
 arXiv = {arXiv:2001.04597}
}

@article {MR5091303,
    AUTHOR = {Flake, Johannes and Laugwitz, Robert and Posur, Sebastian},
     TITLE = {Projection formulas and induced functors on centers of
              monoidal categories},
   JOURNAL = {High. Struct.},
  FJOURNAL = {Higher Structures},
    VOLUME = {10},
      YEAR = {2026},
    NUMBER = {1},
     PAGES = {126--211},
      ISSN = {2209-0606},
   MRCLASS = {18M15 (16T05 18C20)},
  MRNUMBER = {5091303},
}

@incollection {MR4486913,
    AUTHOR = {Etingof, Pavel and Ostrik, Victor},
     TITLE = {On semisimplification of tensor categories},
 BOOKTITLE = {Representation theory and algebraic geometry---a conference
              celebrating the birthdays of {S}asha {B}eilinson and {V}ictor
              {G}inzburg},
    SERIES = {Trends Math.},
     PAGES = {3--35},
 PUBLISHER = {Birkh\"{a}user/Springer, Cham},
      YEAR = {[2022] \copyright 2022},
      ISBN = {978-3-030-82006-0; 978-3-030-82007-7},
   MRCLASS = {18M20 (18M25 20D20 20J15)},
  MRNUMBER = {4486913},
MRREVIEWER = {Johannes\ Flake},
       DOI = {10.1007/978-3-030-82007-7\{_}1}

@article{zbMATH07213678,
 author = {Ostrik, Victor},
 title = {On symmetric fusion categories in positive characteristic},
 fjournal = {Selecta Mathematica. New Series},
 journal = {Sel. Math., New Ser.},
 issn = {1022-1824},
 volume = {26},
 number = {3},
 pages = {19},
 note = {Id/No 36},
 year = {2020},
 language = {English},
 doi = {10.1007/s00029-020-00567-5},
 zbMATH = {7213678},
 Zbl = {1440.18032}
}

@article {MR1774762,
    AUTHOR = {Fomin, Sergey and Procesi, Claudio},
     TITLE = {Fibered quadratic {H}opf algebras related to {S}chubert
              calculus},
   JOURNAL = {J. Algebra},
  FJOURNAL = {Journal of Algebra},
    VOLUME = {230},
      YEAR = {2000},
    NUMBER = {1},
     PAGES = {174--183},
      ISSN = {0021-8693,1090-266X},
   MRCLASS = {16W30 (14M15)},
  MRNUMBER = {1774762},
MRREVIEWER = {Harry\ Tamvakis},
       DOI = {10.1006/jabr.1999.7957},
       URL = {https://doi.org/10.1006/jabr.1999.7957},
}

@article {MR2944712,
    AUTHOR = {Vendramin, L.},
     TITLE = {Nichols algebras associated to the transpositions of the
              symmetric group are twist-equivalent},
   JOURNAL = {Proc. Amer. Math. Soc.},
  FJOURNAL = {Proceedings of the American Mathematical Society},
    VOLUME = {140},
      YEAR = {2012},
    NUMBER = {11},
     PAGES = {3715--3723},
      ISSN = {0002-9939,1088-6826},
   MRCLASS = {16T05 (16T30 20C25)},
  MRNUMBER = {2944712},
MRREVIEWER = {Cristian\ Vay},
       DOI = {10.1090/S0002-9939-2012-11215-8},
       URL = {https://doi.org/10.1090/S0002-9939-2012-11215-8},
}

@article {MR2481050,
    AUTHOR = {Andruskiewitsch, Nicol\'{a}s and Fantino, Fernando and Zhang,
              Shouchuan},
     TITLE = {On pointed {H}opf algebras associated with the symmetric
              groups},
   JOURNAL = {Manuscripta Math.},
  FJOURNAL = {Manuscripta Mathematica},
    VOLUME = {128},
      YEAR = {2009},
    NUMBER = {3},
     PAGES = {359--371},
      ISSN = {0025-2611,1432-1785},
   MRCLASS = {16W30},
  MRNUMBER = {2481050},
MRREVIEWER = {Jos\'{e}\ N.\ Alonso \'{A}lvarez},
       DOI = {10.1007/s00229-008-0237-0},
       URL = {https://doi.org/10.1007/s00229-008-0237-0},
}

@incollection {MR2106930,
    AUTHOR = {Majid, Shahn},
     TITLE = {Noncommutative differentials and {Y}ang-{M}ills on permutation
              groups {$S_n$}},
 BOOKTITLE = {Hopf algebras in noncommutative geometry and physics},
    SERIES = {Lecture Notes in Pure and Appl. Math.},
    VOLUME = {239},
     PAGES = {189--213},
 PUBLISHER = {Dekker, New York},
      YEAR = {2005},
      ISBN = {0-8247-5759-9},
   MRCLASS = {58B32 (16W35)},
  MRNUMBER = {2106930},
MRREVIEWER = {Volodymyr\ V.\ Lyubashenko},
}

@incollection {MR1667680,
    AUTHOR = {Fomin, Sergey and Kirillov, Anatol N.},
     TITLE = {Quadratic algebras, {D}unkl elements, and {S}chubert calculus},
 BOOKTITLE = {Advances in geometry},
    SERIES = {Progr. Math.},
    VOLUME = {172},
     PAGES = {147--182},
 PUBLISHER = {Birkh\"{a}user Boston, Boston, MA},
      YEAR = {1999},
   MRCLASS = {05E15 (14M15)},
  MRNUMBER = {1667680},
MRREVIEWER = {Tadeusz J\'{o}zefiak},
}

@article {MR2811166,
    AUTHOR = {Garc\'{\i}a, Gast\'{o}n Andr\'{e}s and Garc\'{\i}a Iglesias, Agust\'{\i}n},
     TITLE = {Finite-dimensional pointed {H}opf algebras over {$\mathbb S_4$}},
   JOURNAL = {Israel J. Math.},
  FJOURNAL = {Israel Journal of Mathematics},
    VOLUME = {183},
      YEAR = {2011},
     PAGES = {417--444},
      ISSN = {0021-2172},
   MRCLASS = {16T05},
  MRNUMBER = {2811166},
MRREVIEWER = {Sonia Natale},
       DOI = {10.1007/s11856-011-0055-z},
       URL = {https://doi.org/10.1007/s11856-011-0055-z},
}

@article {MR2317945,
    AUTHOR = {Andruskiewitsch, Nicol\'{a}s and Zhang, Shouchuan},
     TITLE = {On pointed {H}opf algebras associated to some conjugacy
              classes in {$\mathbb{S}_n$}},
   JOURNAL = {Proc. Amer. Math. Soc.},
  FJOURNAL = {Proceedings of the American Mathematical Society},
    VOLUME = {135},
      YEAR = {2007},
    NUMBER = {9},
     PAGES = {2723--2731},
      ISSN = {0002-9939},
   MRCLASS = {16W30},
  MRNUMBER = {2317945},
MRREVIEWER = {Gigel Militaru},
       DOI = {10.1090/S0002-9939-07-08880-6},
       URL = {https://doi.org/10.1090/S0002-9939-07-08880-6},
}

@book {Kun,
    AUTHOR = {Kunz, Ernst},
     TITLE = {Introduction to commutative algebra and algebraic geometry},
    SERIES = {Modern Birkh\"auser Classics},
      NOTE = {Translated from the 1980 German original [MR0562105] by
              Michael Ackerman,
              With a preface by David Mumford,
              Reprint of the 1985 edition [MR0789602]},
 PUBLISHER = {Birkh\"auser/Springer, New York},
      YEAR = {2013},
     PAGES = {xiv+238},
      ISBN = {978-1-4614-5986-6; 978-1-4614-5987-3},
   MRCLASS = {14-01 (13-01)},
  MRNUMBER = {2977456},
       DOI = {10.1007/978-1-4614-5987-3},
       URL = {https://doi.org/10.1007/978-1-4614-5987-3},
}

@book {Neu,
    AUTHOR = {Neukirch, J\"urgen},
     TITLE = {Algebraic number theory},
    SERIES = {Grundlehren der mathematischen Wissenschaften [Fundamental
              Principles of Mathematical Sciences]},
    VOLUME = {322},
      NOTE = {Translated from the 1992 German original and with a note by
              Norbert Schappacher,
              With a foreword by G. Harder},
 PUBLISHER = {Springer-Verlag, Berlin},
      YEAR = {1999},
     PAGES = {xviii+571},
      ISBN = {3-540-65399-6},
   MRCLASS = {11Rxx (11-02 11S15 11S31 14C40)},
  MRNUMBER = {1697859},
MRREVIEWER = {Cornelius\ Greither},
       DOI = {10.1007/978-3-662-03983-0},
       URL = {https://doi.org/10.1007/978-3-662-03983-0},
}

\end{document}